\newlength{\depthofsumsign}
\let\I\@undefined
\newbox\shell
\newcommand{\dia}[2]{\setbox\shell=\hbox{\begin{picture}(180,120)(-90,-60)#1
\put(-90,-60){\makebox(180,120)[b]{\large #2}}\end{picture}}\dimen0=\ht
\shell\multiply\dimen0by7\divide\dimen0by16\raise-\dimen0\box\shell\hfill}
\DeclareSymbolFont{operators}{OT1}{txr}{m}{n}
\def\operator@font{\mathgroup\symoperators}
\DeclareSymbolFont{italic}{OT1}{txr}{m}{it}
\DeclareSymbolFontAlphabet{\mathrm}{operators}
\DeclareMathAlphabet{\mathbf}{OT1}{txr}{bx}{n}
\DeclareMathAlphabet{\mathit}{OT1}{txr}{m}{it}
\SetMathAlphabet{\mathit}{bold}{OT1}{txr}{bx}{it}
\DeclareSymbolFont{letters}{OML}{txmi}{m}{it}
\DeclareSymbolFont{lettersA}{U}{txmia}{m}{it}
\DeclareSymbolFontAlphabet{\mathfrak}{lettersA}
\DeclareSymbolFont{symbols}{OMS}{txsy}{m}{n}
\DeclareMathOperator{\Tr}{Tr}
\DeclareMathOperator{\D}{d}
\DeclareMathOperator{\I}{Im}
\def\XXint#1#2#3{{\setbox0=\hbox{$#1{#2#3}{\int}$}
     \vcenter{\hbox{$#2#3$}}\kern-.5\wd0}}
\def\crb{b\hspace{-0.5em}{^\text{--}}}
\def\eor{\hfill$ \square$}
\theoremstyle{plain}
\newtheorem{theorem}{Theorem}[section]
\newtheorem{proposition}[theorem]{Proposition}
\newtheorem{corollary}[theorem]{Corollary}
\newenvironment{remark}[1][Remark]{\begin{trivlist}
\item[\hskip \labelsep {\bfseries #1}]}{\end{trivlist}}
\theoremstyle{definition}
\numberwithin{equation}{section}
\tikzset{>=stealth}
\begin{document}

\selectlanguage{english}
\title{Two inequalities for convex equipotential surfaces}
\author{Yajun Zhou}
\address{Program in Applied and Computational Mathematics (PACM), Princeton University, Princeton, NJ 08544; Academy of Advanced Interdisciplinary Studies (AAIS), Peking University, Beijing 100871, P. R. China }
\email{yajunz@math.princeton.edu, yajun.zhou.1982@pku.edu.cn}

\thanks{\textit{Keywords}: harmonic function, level sets, curvature \\\indent\textit{MSC 2010}: 31B05, 53A05
\\\indent * This research was supported in part  by the Applied Mathematics Program within the Department of Energy
(DOE) Office of Advanced Scientific Computing Research (ASCR) as part of the Collaboratory on
Mathematics for Mesoscopic Modeling of Materials (CM4)}
\date{\today}

\maketitle


\vspace{-1.5em}

\begin{abstract}We establish two geometric inequalities, respectively, for  harmonic functions in exterior Dirichlet problems, and for  Green's functions in interior Dirichlet problems, where the boundary surfaces are smooth and convex. Both inequalities involve integrals over the mean curvature and the Gaussian curvature on an equipotential surface, and the normal derivative of the harmonic potential thereupon. These inequalities generalize a geometric conservation law for equipotential curves in  dimension two, and offer solutions to two free boundary problems in three-dimensional electrostatics.  \end{abstract}

\pagenumbering{roman}


\pagenumbering{arabic}

\section{Introduction}

Consider  a three-dimensional exterior Dirichlet problem (``3-exD'' below),  where a non-constant  harmonic function $ U(\bm r),\bm r\in\Omega\subset \mathbb R^3$ solves a Laplace equation \begin{align}
\nabla^2 U(\bm r)=0,\quad \bm r\in\Omega\label{eq:3D_Laplace}
\end{align}in an unbounded domain $ \Omega$, whose boundary $ \partial \Omega$ is a smooth and connected surface, on which $U(\bm r) $ remains constant.  The flux condition \begin{align}
-\oint_{\partial\Omega}\bm n\cdot\nabla U(\bm r)\D S=\Phi>0\label{eq:3D_flux}
\end{align}  (with   $\bm n$  being the outward unit normal on $ \partial \Omega$, and $ \D S$ the surface element) is equivalent to  the following asymptotic behavior:\begin{align}
 U(\bm r)\sim \frac{\Phi }{4\pi|\bm r|},\quad |\bm r|\to+\infty.\label{eq:U_inf}
\end{align}   If $ \mathbf 0\notin\Omega\cup\partial\Omega$, then one can  define the Green's function $ G(\bm r)=G_{D}^{\partial\Omega}(\mathbf 0,\bm r)$ in  three-dimensional interior Dirichlet problem (``3-inD'' below) as the solution to \begin{align}
\left\{\begin{array}{ll}
\nabla^2 G(\bm r)=0, & \bm r\in\mathbb R^3\smallsetminus(\Omega\cup\partial\Omega\cup\{\mathbf 0\}),\\
G(\bm r)=\text{0,} & \bm r\in \partial\Omega, \\\displaystyle-\lim_{\varepsilon\to0^+}\oint_{|\bm r|=\varepsilon}\bm n\cdot\nabla G(\bm r)\D S=1.
\end{array}\right.\label{eq:3-inD_G}
\end{align} According to the maximum principle for harmonic functions, we have $ U(\bm r)>0,\bm r\in\Omega\cup\partial\Omega $ in 3-exD and  $ G(\bm r)>0,\bm r\in\mathbb R^3\smallsetminus(\Omega\cup\partial\Omega\cup\{\mathbf 0\}) $ in 3-inD. In what follows, we write $ \Sigma_\varphi$ for the equipotential surface on which the harmonic function [either $U(\bm r)$ in 3-exD or $G(\bm r)$ in 3-inD] equals a given non-negative $\varphi$.

In classical physics, the 3-exD (resp.~3-inD) problem occurs in electrostatic equilibrium of an isolated metallic conductor (resp.~a point charge enclosed in a metallic cavity), where our harmonic function of interest is the electrostatic potential, and   $ E(\bm r)=|\nabla U(\bm r)|$ (resp.~$ E(\bm r)=|\nabla G(\bm r)|$) is the magnitude of the electrostatic field, also known as ``field intensity''. If the boundary surface     $ \partial \Omega$ is smooth and  convex (with non-negative Gaussian curvature $ K(\bm r)\geq0,\bm r\in\partial\Omega$), then we have $ E(\bm r)\neq0$ in both 3-exD and 3-inD problems \cite[Proposition 3.2]{MaZhang2014}, and all the equipotential surfaces (excluding the boundary) are smooth and strictly convex (with positive Gaussian curvature $ K(\bm r)>0,\bm r\in\Sigma_\varphi\neq\partial\Omega$) \cite[Theorem 1.1]{JostMaOu2012}.

A quantitative understanding of the interplay between geometry (shape of an equipotential surface $\partial \Omega $) and physics (the distribution of field intensity  $ |\nabla U(\bm r)|,\bm r\in\partial\Omega$) has practical consequences, ranging from the design of lightning-rods  \cite{PriceCrowley1985} to the self-assembly of metallic nanoparticles \cite{Kalsin2006}. The ``common knowledge'' that strongest field  accompanies greatest curvature  is mathematically unfounded \cite[Fig.~5]{PriceCrowley1985}. Therefore, instead of following \textit{electricians' folklore} about  pointwise causal relationship between curvature and field intensity, it is more sensible to study statistical correlations between geometric and physical quantities, in a non-local manner.

In this work, we focus on  3-exD and 3-inD problems with smooth and  convex boundaries (``3-exDc'' and ``3-inDc'' hereafter), and investigate  integrals on equipotential surfaces $ \Sigma_\varphi$ with bounded mean curvature\footnote{By choosing an outward unit normal vector, we are adopting a sign convention where the unit sphere has  mean curvature $H=-1$.} $H(\bm r)\leq0,\bm r\in\Sigma_\varphi $, Gaussian curvature  $K(\bm r)\geq0,\bm r\in\Sigma_\varphi $, and non-vanishing field intensity   $E(\bm r)\neq0,\bm r\in\Sigma_\varphi $. (For convenience, we shall also use the term  ``electrostatic problems''  to cover both   3-exDc and 3-inDc.) We will construct inequalities for surface integrals involving $ H(\bm r)$, $K(\bm r)$ and $ E(\bm r)$, thereby presenting \textit{a priori} bounds for  statistical averages of field intensity fluctuation $ |\bm n\times\nabla\log E(\bm r) |^{2}$ through statistical averages of   of curvature fluctuation $ H^2(\bm r)-K(\bm r)$.

After laying out the  geometric settings in \S\ref{sec:geo_prep}, we will prove our main result   (Theorem \ref{thm:quasi-csv}) and its consequence (Corollary \ref{cor:free_bd}) in \S\ref{sec:3Dconv_ineq}.

\begin{theorem}[Geometric inequalities on convex equipotential surfaces]\label{thm:quasi-csv}For every level set $ \Sigma$ in 3-exDc, we have the following inequality (strict unless $ \partial\Omega $ is a sphere):\begin{align}
\oint_{\Sigma}\frac{4[H^2(\bm r)-K(\bm r)]-|\bm n\times\nabla\log |\nabla U(\bm r)| |^{2}}{|\nabla U(\bm r)|}\D S\geq0.\label{eq:3-exDc}
\end{align}   For every level set $ \Sigma$ in 3-inDc, we have  following inequality  (strict unless $ \partial\Omega $ is a sphere centered at the origin):\begin{align}
\oint_{\Sigma}\frac{4[H^2(\bm r)-K(\bm r)]-|\bm n\times\nabla\log |\nabla G(\bm r)| |^{2}}{|\nabla G(\bm r)|}\D S\leq0.\label{eq:3-inDc}
\end{align} \end{theorem}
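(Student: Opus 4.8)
The plan is to reduce both \eqref{eq:3-exDc} and \eqref{eq:3-inDc} to a single Bochner-type identity for $\log|\nabla U|$, and then to turn that identity into a monotonicity statement along the level-set foliation. First I would fix the unit normal $\bm\nu=\nabla U/E$ (with $E=|\nabla U|$, resp.\ $|\nabla G|$) along the leaves $\Sigma_\varphi$, and collect the elementary level-set identities for a harmonic function: the shape operator equals $E^{-1}(\nabla^2U)|_{T\Sigma}$, the mean curvature obeys $2H=\partial_{\bm\nu}\log E$, and hence $4(H^2-K)=(\kappa_1-\kappa_2)^2=(\partial_{\bm\nu}\log E)^2-4K$. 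Since $\bm n$ is a unit vector, $|\bm n\times\nabla\log E|^2=|\nabla_T\log E|^2$ is precisely the squared tangential gradient, so the numerator in \eqref{eq:3-exDc} rewrites as $(\partial_{\bm\nu}\log E)^2-|\nabla_T\log E|^2-4K$, a form in which the geometry has been traded for derivatives of $\log E$.

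The crux is the pointwise identity
\[
\Delta\log|\nabla U|=-2K,
\]
valid wherever $\nabla U\neq0$. I would derive it from Bochner's formula $\tfrac12\Delta|\nabla U|^2=|\nabla^2U|^2$ (using $\Delta U=0$) together with the trace-free splitting of the Hessian in the adapted frame (principal directions, $\bm\nu$): the normal--normal entry equals $\partial_{\bm\nu}E=-E(\kappa_1+\kappa_2)$ by harmonicity, and the algebra collapses the quadratic terms to $-2E^2\kappa_1\kappa_2$. Because the interior level sets are strictly convex, $K>0$ off $\partial\Omega$, so $\log|\nabla U|$ is strictly superharmonic there; in dimension two the identical computation yields $\Delta\log|\nabla U|=0$, which is exactly the conservation law this theorem generalizes. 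This is why two dimensions give an equality and three dimensions an inequality.

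Next I would recognise that $(\partial_{\bm\nu}\log E)^2-|\nabla_T\log E|^2=2\,T(\bm\nu,\bm\nu)$, where $T_{ij}=\partial_i\log E\,\partial_j\log E-\tfrac12|\nabla\log E|^2\delta_{ij}$ is the stress-energy tensor of $\log E$, whose divergence is $\partial_jT_{ij}=(\Delta\log E)\,\partial_i\log E=-2K\,\partial_i\log E$. The strategy is then to apply the divergence theorem to the flux of $E^{-1}T\bm\nu$ across the shell between two leaves and to combine it, through the co-area formula $\D V=E^{-1}\D S\,\D\varphi$ and the companion identity obtained by integrating $\Delta\log E=-2K$ over a shell (which produces $\oint_{\Sigma_\varphi}K/E\,\D S$), so as to express the left-hand integral of \eqref{eq:3-exDc} over $\Sigma$ as a bulk integral of a positive-weighted multiple of $K\ge0$ over the region exterior to $\Sigma$, modulo a boundary term at infinity. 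Equivalently, $\mathcal I(\varphi)=\oint_{\Sigma_\varphi}E^{-1}[4(H^2-K)-|\nabla_T\log E|^2]\,\D S$ is monotone in $\varphi$ with a defect proportional to $\oint_{\Sigma_\varphi}K/E\,\D S$. To fix the sign I would evaluate the sphere limit: as $\varphi\to0^+$ in 3-exDc the leaves exhaust a round sphere at infinity (from $U\sim\Phi/4\pi|\bm r|$), where both $4(H^2-K)$ and $|\nabla_T\log E|^2$ decay fast enough that $\mathcal I(\varphi)\to0$, and monotonicity then forces $\mathcal I\ge0$ on every $\Sigma$. For 3-inDc the foliation runs oppositely, the sphere limit sitting at the point charge as $\varphi\to+\infty$; this reverses the orientation of the shell and flips the inequality to $\le0$. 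Equality throughout forces the bulk $K$-term to vanish, so every leaf is umbilic with $|\nabla_T\log E|\equiv0$; a closed umbilic convex surface is a sphere, and the concentric spherical foliation makes $U$ radial, whence $\partial\Omega$ is a sphere, centered at the origin in the 3-inDc case.

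The hard part will be the middle step. The surface integrand carries the weight $E^{-1}$ and mixes intrinsic quantities ($\Delta_\Sigma\log E$, $|\nabla_T\log E|^2$) with extrinsic ones ($\kappa_1,\kappa_2$) and with normal derivatives, so assembling $\operatorname{div}(E^{-1}T\bm\nu)$ — together with the first-variation formulas for the area element and for the mean curvature under the speed-$E^{-1}$ normal flow — into a clean, sign-definite multiple of $K$ demands careful cancellation of several cross terms, all ultimately controlled by $\Delta\log E=-2K$ and by $\oint_{\Sigma}\Delta_\Sigma(\,\cdot\,)\,\D S=0$. The second delicate point is the decay bookkeeping at the sphere at infinity in 3-exDc, where one must verify that the multipole corrections to $U$ make the boundary contribution genuinely vanish rather than leaving a finite residue.
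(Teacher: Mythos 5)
Your proposal reproduces the skeleton of the paper's own argument: the pointwise identity $\Delta\log E=-2K$ is exactly Proposition \ref{prop:lnE_K} (your Bochner-formula derivation of it is valid, and arguably cleaner than the paper's curvilinear-coordinate computation), and the architecture --- monotonicity of $\mathcal I(\varphi)=\oint_{\Sigma_\varphi}\bigl[4(H^2-K)-|D\log E|^2\bigr]E^{-1}\D S$ along the foliation, vanishing of $\mathcal I$ at the spherical end ($\varphi\to0$ in 3-exDc, $\varphi\to+\infty$ in 3-inDc), then rigidity --- is precisely \S\ref{subsec:mono_int}--\S\ref{subsec:geo_ineq}. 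So the plan is sound in outline.

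The gap is that the step you explicitly defer (``the hard part'') \emph{is} the theorem, and what you assert about its outcome is wrong in a way that breaks the rest of your argument. The paper's monotonicity formula (Corollary \ref{cor:betaij_int}) is
\begin{equation*}
\frac{\D}{\D\varphi}\oint_{\Sigma_{\varphi}}\Bigl( H^2 -K-\tfrac14|D\log E|^2\Bigr)\frac{\D S}{E}=-\frac{3}{2}\oint_{\Sigma_\varphi}\beta^{ij}\partial_{i}\frac{1}{E}\partial_{j}\frac{1}{E}\D S,
\end{equation*}
where $\beta^{ij}=2Hg^{ij}-g^{ik}b^j_k$ is the adjugate of the second fundamental form: the defect is a quadratic form in the \emph{tangential gradient of $1/E$}, sign-definite because $(\beta^{ij})$ is negative definite on strictly convex leaves \cite{JostMaOu2012}, and vanishing exactly when $E$ is constant on the leaf. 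You instead claim the bulk term is ``a positive-weighted multiple of $K\geq0$,'' with the sign ``ultimately controlled by $\Delta\log E=-2K$.'' That mechanism cannot be right, and your own equality analysis shows it: on strictly convex leaves $K>0$ everywhere, so a genuinely positive weight times $K$ could never integrate to zero, yet equality must hold for concentric spheres. (One may write $\beta=K\hat W^{-1}$, so the defect is $K$ times a quadratic form in $D(1/E)$ --- but then producing that specific weight, signed by convexity and vanishing with $D(1/E)$, is exactly the ``careful cancellation'' you have not carried out; the paper does it via the vector Green identity applied to $\bm Q=\nabla\log E$, $\bm F=\bm n/E$, together with the evolution formula \eqref{eq:Kphi1} of Proposition \ref{prop:GC_ev}, which itself needs the Codazzi--Mainardi equations.) Two further consequences of this mischaracterization: your rigidity chain is misordered --- vanishing of the defect yields only $D\log E\equiv0$ on intermediate leaves, after which umbilicity follows by feeding this back into $\mathcal I=0$ and using the pointwise bound $H^2-K\geq0$, and passing from concentric spherical leaves to a spherical $\partial\Omega$ still requires unique continuation --- and the endpoint limit is asserted rather than proved, whereas since $\D S/E=O(|\bm r|^4)$ one must show $H^2-K$ and $|D\log E|^2$ decay like $O(|\bm r|^{-6})$, which the paper obtains from a dipole-order multipole expansion.
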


\begin{corollary}[Spherical solutions to two free boundary value problems]\label{cor:free_bd}If there is a spherical equipotential surface in 3-exDc, then the boundary  $ \partial\Omega $ must be a sphere. If there is an equipotential surface in 3-inDc on which $ |\nabla G(\bm r)|$ remains constant, then  $ \partial\Omega $ must be a sphere centered at the origin.\end{corollary}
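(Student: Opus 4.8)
The plan is to obtain Corollary \ref{cor:free_bd} purely as the rigidity (equality) case of Theorem \ref{thm:quasi-csv}, with no further differential-geometric computation required. The key observation is that each of the two hypotheses suppresses exactly one of the two competing contributions in the relevant integrand: writing the principal curvatures as $\kappa_1,\kappa_2$, one has $4[H^2(\bm r)-K(\bm r)]=(\kappa_1-\kappa_2)^2$, so the curvature term measures the umbilicity defect and vanishes precisely at umbilic points, while the field term $|\bm n\times\nabla\log|\nabla U(\bm r)||^2$ (resp.\ with $G$) is the squared tangential gradient of the log field intensity and vanishes precisely where that intensity is constant along the surface. Forcing either to vanish leaves a sign-definite integrand pointing against the Theorem's inequality, a conflict that can only be resolved at equality.

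First, for the exterior assertion I would assume $\Sigma$ is a spherical equipotential surface. A round sphere is totally umbilic, so $\kappa_1=\kappa_2$ everywhere on $\Sigma$ and hence $4[H^2-K]\equiv0$; the numerator in \eqref{eq:3-exDc} then collapses to $-|\bm n\times\nabla\log|\nabla U||^2\le0$, forcing $\oint_\Sigma(\,\cdots)\D S\le0$. Since \eqref{eq:3-exDc} supplies the reverse inequality $\ge0$, the integral must vanish, i.e.\ equality holds in \eqref{eq:3-exDc}; reading the strictness clause contrapositively then yields that $\partial\Omega$ is a sphere. For the interior assertion I would instead assume $|\nabla G(\bm r)|$ is constant on $\Sigma$, so that $\log|\nabla G|$ is constant along $\Sigma$ and its tangential gradient $\bm n\times\nabla\log|\nabla G|$ vanishes; now the numerator in \eqref{eq:3-inDc} collapses to $4[H^2-K]=(\kappa_1-\kappa_2)^2\ge0$, forcing $\oint_\Sigma(\,\cdots)\D S\ge0$, which together with the reverse inequality \eqref{eq:3-inDc} again gives equality and hence, via the strictness clause, that $\partial\Omega$ is a sphere centered at the origin.

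There is essentially no analytic obstacle at this stage, since all of the depth has already been invested in Theorem \ref{thm:quasi-csv}; the only point to treat with care is the logical reading of the phrase ``strict unless $\partial\Omega$ is a sphere,'' which must be used in the form ``equality in the displayed inequality for a single level set $\Sigma$ forces the stated sphericity of $\partial\Omega$.'' I would also stress that the conclusion concerns $\partial\Omega$ and is insensitive to which level set realizes the hypothesis: whether the spherical surface (resp.\ the surface of constant field intensity) is $\partial\Omega$ itself or an interior equipotential, the same equality is triggered and the same rigidity follows.
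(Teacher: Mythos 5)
Your proposal is correct and is essentially the paper's own argument: in each case the hypothesis annihilates one term of the integrand (umbilicity kills $4[H^2-K]$ for the spherical surface; constant $|\nabla G|$ kills the tangential-gradient term), so the integral acquires the sign opposite to the one guaranteed by Theorem \ref{thm:quasi-csv}, forcing equality, and the equality/rigidity case of the theorem (proved in the paper via unique continuation) then yields the stated sphericity of $\partial\Omega$.
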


 Two-dimensional analogs of electrostatic problems can be regarded as the situations of three-di\-men\-sion\-al cylindrical surfaces  with translational invariance along the $z$-axis. For the two-dimensional cross-section of such cylindrical surfaces, the curvature of an equipotential curve  becomes $ \kappa=-2H $, while the Gaussian curvature vanishes identically $ K\equiv0$. Therefore, the surface integrals appearing in Theorem~\ref{thm:quasi-csv} are reminiscent of  the following integrals on equipotential curves \cite[(1.14)]{Geom2D}:\begin{align}
\oint_\Sigma \frac{[\kappa(\bm r)]^{2}-|\bm n\times\nabla\log |\nabla U(\bm r)| |^{2}}{|\nabla U(\bm r)|}\D s\quad\text{and}\quad \oint_\Sigma \frac{[\kappa(\bm r)]^{2}-|\bm n\times\nabla\log |\nabla G(\bm r)| |^{2}}{|\nabla G(\bm r)|}\D s\label{eq:2D_csv}
\end{align}   for 2-exD and 2-inD, respectively. In our previous work \cite[\S2.2 and \S3]{Geom2D},
 we have shown that both integrals are constants (independent of $\varphi$) when the boundary $ \partial \Omega$ is a smooth Jordan curve.
Our proof in   \S\ref{sec:3Dconv_ineq} will reveal a unified mechanism underlying the geometric inequalities in Theorem~\ref{thm:quasi-csv} and the geometric conservation laws in  \eqref{eq:2D_csv}.

Theorem~\ref{thm:quasi-csv} unveils a subtle constraint between  the fluctuations of curvatures and field intensity  on a single equipotential surface. Its toy application (Corollary \ref{cor:free_bd}), by contrast, contains less surprising statements (cf.~stronger results  for the Green's functions in \cite[Theorem III.2]{PayneSchaefer1989}). To conclude this article, we will strengthen the first half of  Corollary \ref{cor:free_bd} in $ \mathbb R^d$ ($ d\geq2$), and sharpen its second half in $ \mathbb R^2$.

\section{Geometric Preparations\label{sec:geo_prep}}In this section, we set up a geometric framework for  electrostatic problems (\S\ref{subsec:curv_coord}), and prepare some differential formulae (\S\ref{subsec:Lap_n_H}) that will be useful later. Unavoidably, we will recover some standard identities in classical differential geometry \cite{Minkowski1903,Weatherburn1,doCarmo2016}, as well as reproduce part of the modern investigations of level sets for Green's functions on manifolds \cite{Colding2012,ColdingMinicozzi2013,ColdingMinicozzi2014,MaZhang2014}. Nevertheless, we choose to include our derivations here, for the sake of consistency and accessibility. Indeed, the availability of certain vector calculus identities in the flat Euclidean space  $ \mathbb R^3$ does make our computations more straightforward than generic cases on intrinsically curved Riemannian manifolds.

\subsection{Curvilinear coordinates and Laplacian decomposition\label{subsec:curv_coord}}
Akin to  our previous work \cite[\S2.1]{Geom2D}, we set up a curvilinear coordinate system $ \bm r(\varphi ,u,v)\equiv\bm r(u^0,u^1,u^2)$ that is compatible with equipotential surfaces in $ \mathbb R^3$. In this coordinate system,  $ \varphi\equiv u^0$ coincides with the value of the harmonic potential [$U(\bm r)$ in 3-exD, $ G(\bm r)$ in 3-inD], and a pair of points on distinct equipotential surfaces share the same $(u,v)\equiv(u^1,u^2) $  coordinates if and only if they are joined by an integral curve of $ \nabla \varphi$.
Thus, a family of  equipotential surfaces $\Sigma_\varphi $  evolve according to the following equation\begin{align}\frac{\partial\bm r(\varphi ,u,v)}{\partial\varphi}=-\frac{\bm n}{E(\bm r)},\quad\bm r\in\Sigma_\varphi,\label{eq:GM_flow}\end{align} which conserves the total surface flux \begin{align}\frac{\D}{\D\varphi}\oint_{\Sigma_\varphi}\bm n\cdot\nabla\varphi\D S=0.\end{align}This conservation is expected from  the Gau{\ss} law of electrostatics, which is part of the Maxwell equations for classical electrodynamics \cite[\S1.4, \S1.7]{Jackson:EM}. Hereafter, we will refer to \eqref{eq:GM_flow} as the Gau{\ss}--Maxwell flow.

On each equipotential surface, we define the components of  the \textit{covariant metric tensor} $ (g_{ij})$  as $g_{ij}:=\partial_i\bm r\cdot\partial_j\bm r $, where $\partial_i$
is short-hand for $\partial/\partial u^i$. The \textit{contravariant metric tensor}
$(g^{ij})$ is  the matrix inverse of $(g_{ij})$. The line element on each equipotential surfaces is given by  $ \D s^2=g_{ij}\D u^i\D u^j$, where the Einstein summation convention is applied hereinafter, and a Latin index takes values in $ \{1,2\}$.

On each equipotential surface, we have the {Gauss} formula \cite[\S4.3]{doCarmo2016}: $\partial_i\partial_j\bm
 r=\Gamma_{ij}^k\partial_k\bm r+b_{ij}\bm n $, for \textit{connection coefficients}   $\Gamma_{ij}^k:=g^{k\ell}\partial_i\partial_j\bm r\cdot\partial_\ell\bm r=\frac12g^{k\ell}(\partial_ig_{\ell
j}+\partial_jg_{i\ell}-\partial_\ell g_{ij})$  \cite[\S5.7]{doCarmo2016} and the \textit{coefficients of second fundamental form}   $b_{ij}:=\partial_i\partial_j\bm r\cdot\bm n$.
The components of the Weingarten transform  $\hat W=(b_i^j)$ is defined by $b_i^j:=g^{jk}b_{ki}$
and appears in the Weingarten formula: $\partial_i\bm n=-b_i^j\partial_j\bm
r $, that is, $\D \bm n=-\hat W \D\bm r$ for infinitesimal changes tangent to the equipotential surface.
The \textit{mean curvature} is half the trace of the Weingarten transform: $H:=\frac12\Tr(\hat W)=\frac12(b_1^1+b_2^2)=\frac12g^{ij}b_{ij};$
while the \textit{Gaussian curvature} is the determinant of the Weingarten transform: $K:=\det(\hat W)=b_1^1b_2^2-b_2^1b_1^2$.

Being compatible with the Gau{\ss}--Maxwell flow equation in \eqref{eq:GM_flow}, we have $ g_{00}:=\partial_0\bm r\cdot\partial_0\bm r=E^{-2}=1/g^{00}$ and  $g_{0i}=g^{0i}:=\partial_0\bm r\cdot\partial_i\bm r=0$. In this way, the Euclidean line element $ \D \uline{s}^2=\D x^2+\D y^2+\D z^2$  can be reformulated as \begin{align}
\D \uline{s}^2=g_{\mu\nu}\D u^\mu\D u^\nu=\frac{\D \varphi^2}{E^2}+g_{ij}\D u^i\D u^j,
\end{align}where a Greek index takes values in $ \{0,1,2\}$. One may extend
the definition of connection coefficients as $\partial_\mu\partial_\nu\bm
 r=\Gamma_{\mu\nu}^\lambda\partial_\lambda\bm r$, where the newly-arisen connection coefficients will be computed in the following proposition.
\begin{proposition}[Connection coefficients]\label{prop:conn0} We have the following computations for connection coefficients involving the index $0$:\begin{align}&\Gamma _{ij}^{0} =-Eb_{ij},\quad\Gamma _{j0}^{0} ={}-\frac{1}{E}\frac{\partial E}{\partial u^{j}}, \quad
\Gamma _{j0}^{k} =\frac{b_{j}^{k}}{E};\label{eq:Gamma1}\\&\Gamma _{ij}^{0} =-\frac{E^2}{2}\frac{\partial g_{ij}}{\partial \varphi },\quad\Gamma _{00}^{j}=-\frac{1}{2}g^{jm}\frac{\partial g_{00}}{\partial u^{m}}=%
\frac{1}{E^{3}}g^{jm}\frac{\partial E}{\partial u^{m}};\label{eq:Gamma2}\\&\Gamma^0_{00}=\frac{1}{2}g^{00}\partial_0g_{00}=-\frac{\partial}{\partial\varphi}\log E=\Gamma _{m0}^m.\label{eq:Gamma3}\end{align}
\end{proposition}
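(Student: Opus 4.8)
The plan is to read off every coefficient from two complementary sources: the geometric Gauss and Weingarten formulas for the first line \eqref{eq:Gamma1}, and the standard metric expression $\Gamma_{\mu\nu}^\lambda=\frac12 g^{\lambda\rho}(\partial_\mu g_{\rho\nu}+\partial_\nu g_{\mu\rho}-\partial_\rho g_{\mu\nu})$ for the second and third lines \eqref{eq:Gamma2}--\eqref{eq:Gamma3}. Throughout I would exploit the block-diagonal structure $g_{00}=E^{-2}=1/g^{00}$, $g_{0i}=g^{0i}=0$, together with the identity $\bm n=-E\,\partial_0\bm r$ read off from the Gau{\ss}--Maxwell flow \eqref{eq:GM_flow}. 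The bulk of the statement is then pure bookkeeping; only the final equality carries genuine analytic content.

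For \eqref{eq:Gamma1} I would first substitute $\bm n=-E\,\partial_0\bm r$ into the Gauss formula $\partial_i\partial_j\bm r=\Gamma_{ij}^k\partial_k\bm r+b_{ij}\bm n$, so that its normal part becomes $-Eb_{ij}\,\partial_0\bm r$; matching against the extended definition $\partial_i\partial_j\bm r=\Gamma_{ij}^\lambda\partial_\lambda\bm r$ gives $\Gamma_{ij}^0=-Eb_{ij}$. Next I would differentiate the flow relation $\partial_0\bm r=-\bm n/E$ with respect to $u^j$ and insert the Weingarten formula $\partial_j\bm n=-b_j^k\partial_k\bm r$ alongside $\bm n=-E\,\partial_0\bm r$; separating the resulting expression into its $\partial_k\bm r$ and $\partial_0\bm r$ components reads off $\Gamma_{j0}^k=b_j^k/E$ and $\Gamma_{j0}^0=-E^{-1}\partial_j E$ at one stroke.

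For \eqref{eq:Gamma2} and the first equality of \eqref{eq:Gamma3} I would feed the block-diagonal metric into the Christoffel formula. Since $g^{0\rho}=E^2\delta^\rho_0$ and $g_{0i}=0$, the sum defining $\Gamma_{ij}^0$ collapses to $\frac12 g^{00}(-\partial_0 g_{ij})=-\frac{E^2}{2}\partial_\varphi g_{ij}$ (which, compared with $\Gamma_{ij}^0=-Eb_{ij}$, records the evolution $\partial_\varphi g_{ij}=2b_{ij}/E$ as a free consistency check). The same collapse gives $\Gamma_{00}^j=-\frac12 g^{jm}\partial_m g_{00}$ and $\Gamma_{00}^0=\frac12 g^{00}\partial_0 g_{00}$; inserting $g_{00}=E^{-2}$, whence $\partial_m g_{00}=-2E^{-3}\partial_m E$ and $\partial_0 g_{00}=-2E^{-3}\partial_\varphi E$, converts these into the stated $\frac{1}{E^3}g^{jm}\partial_m E$ and $-\partial_\varphi\log E$.

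The step I expect to be the crux is the final equality $\Gamma_{00}^0=\Gamma_{m0}^m$, because the work above yields $\Gamma_{00}^0=-\partial_\varphi\log E$ whereas the Latin trace is $\Gamma_{m0}^m=\Tr(\hat W)/E=2H/E$, and these agree only because $\varphi$ is harmonic. I would extract the identity from the Laplace equation written as $\nabla^2\varphi=\frac{1}{\sqrt g}\,\partial_0(\sqrt g\,g^{00})=0$ (all other terms drop since $g^{i0}=0$ and $\partial_\nu\varphi=\delta_\nu^0$), which forces $\partial_0\log\sqrt g=-2\partial_\varphi\log E$; combining this with the contracted-symbol identity $\partial_0\log\sqrt g=\Gamma_{\mu0}^\mu=\Gamma_{00}^0+\Gamma_{m0}^m$ and the known value $\Gamma_{00}^0=-\partial_\varphi\log E$ isolates $\Gamma_{m0}^m=-\partial_\varphi\log E=\Gamma_{00}^0$. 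Equivalently and more transparently, I would use the operator identity $\bm n\cdot\nabla=-E\,\partial_\varphi$ and $\nabla\cdot\bm n=-2H$, so that $0=\nabla^2\varphi=\nabla\cdot(-E\bm n)$ unpacks to $\partial_\varphi E=-2H$, i.e.\ $2H/E=-\partial_\varphi\log E$, which is the desired coincidence.
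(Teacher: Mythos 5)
Your proposal is correct and follows essentially the same route as the paper: \eqref{eq:Gamma1} by matching the extended relation $\partial_\mu\partial_\nu\bm r=\Gamma_{\mu\nu}^\lambda\partial_\lambda\bm r$ against the Gauss and Weingarten formulas rewritten via $\bm n=-E\,\partial_0\bm r$, \eqref{eq:Gamma2} and the first equalities of \eqref{eq:Gamma3} by the Christoffel formula with the block-diagonal metric, and the final equality $\Gamma_{00}^0=\Gamma_{m0}^m$ from harmonicity of $\varphi$ --- where the paper reaches the same conclusion by comparing its two expressions for $\Gamma_{ij}^0$ to get $\partial_\varphi g_{ij}=2b_{ij}/E$, combining Jacobi's formula with the divergence-free condition $\partial_0\log(E\sqrt{g})=0$ to obtain \eqref{eq:2H}, and recasting that as the harmonic coordinate condition, which is the same content as your contracted-Christoffel (or $\nabla\cdot\bm n=-2H$) argument. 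One caution: in your crux step the symbol $\sqrt{g}$ must denote the full three-dimensional volume factor $\sqrt{\det(g_{\mu\nu})}=\sqrt{\det(g_{ij})}/E$ --- your Laplacian formula, the forced relation $\partial_0\log\sqrt{g}=-2\partial_\varphi\log E$, and the identity $\Gamma_{\mu 0}^{\mu}=\partial_0\log\sqrt{g}$ are all correct only under that reading, which clashes with the paper's convention $g=\det(g_{ij})$, so the notation should be fixed before splicing.
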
 \begin{proof}
 To prove the three identities in \eqref{eq:Gamma1}, it would suffice to compare the equation $\partial_\mu\partial_\nu\bm
 r=\Gamma_{\mu\nu}^\lambda\partial_\lambda\bm r$ with the
 Gau{\ss}  and Weingarten formulae: \begin{align}\frac{\partial ^{2}{\bm r}}{\partial u^{i}\partial u^{j}} =\Gamma _{ij}^{k}%
\frac{\partial {\bm r}}{\partial u^{k}}-Eb_{ij}\frac{\partial {\bm r}}{%
\partial \varphi }, \qquad
\frac{\partial }{\partial u^{j}}\left( E\frac{\partial {\bm r}}{\partial
\varphi }\right) =b_{j}^{k}\frac{\partial {\bm r}}{\partial u^{k}}.\end{align}
The two identities in \eqref{eq:Gamma2} follow from the
Christoffel formula  $\Gamma_{\mu\nu}^\lambda=\frac12g^{\lambda\eta}(\partial_\mu
g_{\eta
\nu}+\partial_\nu g_{\mu\eta}-\partial_\eta g_{\mu\nu})$.

Before deducing \eqref{eq:Gamma3}, we compare the two expressions of  $\Gamma_{ij}^0$ in \eqref{eq:Gamma1} and \eqref{eq:Gamma2} and write down\begin{align}\label{eq:4HE}\frac{\partial g_{ij}}{\partial \varphi
}=\frac2Eb_{ij},\quad\frac{\partial\log\det( g_{ij})}{\partial \varphi
}=g^{ij}\frac{\partial g_{ij}}{\partial \varphi
}=\frac2Eg^{ij}b_{ij}=\frac{4H}{E}.\end{align}On the other hand, the
Laplace equation  $ \nabla^2\varphi(\bm r)=-\nabla\cdot\bm E(\bm r)=0 $  implies zero divergence of
$\bm E$-field, i.e. $\partial_0\log(E\sqrt{g})
=0$, (hereafter $g=\det(g_{ij}) $), thus \eqref{eq:4HE} gives rise to
\begin{align}2H+\frac{\partial E}{\partial \varphi
}=0,\qquad\text{i.e.}\quad \bm n\cdot\nabla\log E=2H.\label{eq:2H}\end{align}It is easy to recast \eqref{eq:2H} into the harmonic coordinate condition
$\Gamma^0:=g^{ij}\Gamma_{ij}^0+g^{00}\Gamma_{00}^0=0 $, which leads to  \eqref{eq:Gamma3}.
\end{proof}\begin{remark}Using the identity $\partial_0g_{ij}=2b_{ij}/E $, we can  also readily  deduce $\partial_0 g^{ij}=-2g^{ik}b_k^j/E$.
\eor\end{remark}

The three-dimensional Laplace operator $ \Delta=\partial_x^2+\partial_y^2+\partial_z^2$ can be presented in curvilinear coordinates
as \begin{align}\Delta=g^{\mu\nu}(\partial_\mu\partial_\nu-\Gamma_{\mu\nu}^\lambda\partial_\lambda)=\frac1{\sqrt{\det(g_{\mu\nu})}}\partial_\lambda\left( g^{\lambda\eta}\sqrt{\det(g_{\mu\nu})}\partial_\eta \right).\end{align}
Here, $\det(g_{\mu\nu})=g/E^{2}$ for $ g=\det(g_{ij})$. Similarly, one can define the \textit{Laplace operator on equipotential surface $\Sigma$}
as \begin{align}\Delta _{\Sigma }=g^{ij}( \partial_i\partial_j-\Gamma_{ij}^k\partial_k)=\frac1{\sqrt{g}}\partial_k\left( g^{k\ell}\sqrt{g}\partial_\ell \right).\end{align}\begin{proposition}[Decomposition of Laplacian]\label{prop:Lap_decomp}The Laplace operator $ \Delta$ can be rewritten as\begin{align}\Delta =\Delta _{\Sigma }+E^{2}\frac{\partial ^{2}}{\partial \varphi ^{2}}-\frac{1%
}{E}g^{jm}\frac{\partial E}{\partial u^{m}}\frac{\partial }{\partial u^{j}}.\label{eq:LapOp}\end{align}\end{proposition}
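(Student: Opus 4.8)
The plan is to start from the coordinate expression $\Delta = g^{\mu\nu}(\partial_\mu\partial_\nu - \Gamma_{\mu\nu}^\lambda\partial_\lambda)$ and to split every Greek-index summation into its temporal ($0$) and spatial ($\{1,2\}$) blocks, exploiting the block-diagonal structure $g^{0i}=g_{0i}=0$ together with $g^{00}=E^2$ established in \S\ref{subsec:curv_coord}. For the pure second-order part, this block structure collapses $g^{\mu\nu}\partial_\mu\partial_\nu$ to $E^2\partial_\varphi^2 + g^{ij}\partial_i\partial_j$ with no mixed $\partial_\varphi\partial_i$ cross terms, immediately isolating the $E^2\partial^2/\partial\varphi^2$ summand and the second-order part of $\Delta_\Sigma$. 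The connection coefficients carrying a $0$ index, already tabulated in Proposition \ref{prop:conn0}, will then supply precisely the two remaining terms.

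Next I would organize the first-order part $-g^{\mu\nu}\Gamma_{\mu\nu}^\lambda\partial_\lambda = -(g^{00}\Gamma_{00}^\lambda + g^{ij}\Gamma_{ij}^\lambda)\partial_\lambda$ by the value of $\lambda$. For $\lambda = 0$ the coefficient is exactly $g^{ij}\Gamma_{ij}^0 + g^{00}\Gamma_{00}^0 = \Gamma^0$, which vanishes by the harmonic coordinate condition $\Gamma^0=0$ derived in the proof of Proposition \ref{prop:conn0}; this is the one step where the Laplace equation genuinely enters, killing the would-be drift along $\partial_\varphi$. For spatial $\lambda = k$, the piece $-g^{ij}\Gamma_{ij}^k\partial_k$ completes the surface Laplacian $\Delta_\Sigma = g^{ij}(\partial_i\partial_j - \Gamma_{ij}^k\partial_k)$, while the leftover $-g^{00}\Gamma_{00}^k\partial_k = -E^2\Gamma_{00}^k\partial_k$ is computed by substituting $\Gamma_{00}^k = E^{-3}g^{km}\,\partial E/\partial u^m$ from \eqref{eq:Gamma2}, yielding exactly $-\tfrac1E g^{km}(\partial E/\partial u^m)\partial_k$. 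Adding the three surviving contributions produces \eqref{eq:LapOp}.

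Since Proposition \ref{prop:conn0} has already done all the differential-geometric work, the computation is essentially bookkeeping, and I expect no real obstacle. The one substantive point is the disappearance of the first-order $\partial/\partial\varphi$ term: this is not an algebraic accident but the signature of $\varphi$ being harmonic (equivalently, of $\bm E$ being divergence-free), so I would flag it explicitly. As an independent cross-check, I might re-derive \eqref{eq:LapOp} from the divergence form $\Delta = \tfrac{E}{\sqrt{g}}\partial_\lambda(g^{\lambda\eta}\tfrac{\sqrt{g}}{E}\partial_\eta)$, where the same harmonicity surfaces through the identity $\partial_0(E\sqrt{g})=0$ used in the proof of Proposition \ref{prop:conn0}.
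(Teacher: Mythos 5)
Your proposal is correct and follows essentially the same route as the paper: both decompose $\Delta=g^{\mu\nu}(\partial_\mu\partial_\nu-\Gamma_{\mu\nu}^\lambda\partial_\lambda)$ using the block structure $g^{0i}=0$, $g^{00}=E^2$, and then substitute the connection coefficients from Proposition \ref{prop:conn0}, with the first-order $\partial/\partial\varphi$ term vanishing via the harmonic coordinate condition $\Gamma^0=0$ (which you invoke directly, and which the paper realizes equivalently by substituting $\Gamma_{ij}^0=-Eb_{ij}$ and $\Gamma_{00}^0=-\partial_\varphi\log E$ together with \eqref{eq:2H}). Your explicit flagging of that cancellation as the point where harmonicity enters is a nice touch but not a different argument.
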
\begin{proof}By definition, we have\begin{align}\Delta =\Delta _{\Sigma }-g^{ij}\Gamma _{ij}^{0}\frac{\partial }{\partial
\varphi }+g^{00}\left( \frac{\partial ^{2}}{\partial \varphi ^{2}}-\Gamma
_{00}^{j}\frac{\partial }{\partial u^{j}}-\Gamma _{00}^{0}\frac{\partial }{%
\partial \varphi }\right).\end{align}With the substitution of $g^{00}=E^2$ and the expressions for  $\Gamma _{ij}^0,\Gamma^j_{00},\Gamma^0_{00} $ from Proposition~\ref{prop:conn0}, we obtain the claimed result in \eqref{eq:LapOp}. \end{proof}\begin{corollary}[Geometric description of $ \nabla\log E$]We have \begin{align}\label{eq:gradlnE}\nabla\log
E(\bm r)=k(\bm r)\bm N(\bm r)+2H(\bm r)\bm n(\bm r).\end{align} Here, $k(\bm r)$ is
the curvature (inverse of the radius of curvature) of the electric field
line ($\bm E$-line) that passes $\bm r$, and $H(\bm r)$ is the mean curvature
of the equipotential surface that passes $\bm r$, with $\bm N$ and $\bm n$
being the respective unit normal vectors for the $\bm E$-line and equipotential surface. \end{corollary}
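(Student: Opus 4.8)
The plan is to split $\nabla\log E$ into its component normal to the equipotential surface $\Sigma_\varphi$ and its component tangent to $\Sigma_\varphi$, and to identify each piece separately. The normal component requires no new work: contracting with $\bm n$ and invoking \eqref{eq:2H} gives $\bm n\cdot\nabla\log E=2H$, so the normal part of $\nabla\log E$ is exactly $2H\bm n$. The entire content of the corollary therefore lies in showing that the tangential part $\nabla_\Sigma\log E:=\nabla\log E-(\bm n\cdot\nabla\log E)\bm n$ coincides with the curvature vector $k\bm N$ of the $\bm E$-line through $\bm r$.

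The key observation is that the $\bm E$-lines are the integral curves of $\nabla\varphi$, hence their unit tangent is $\bm T=\pm\bm n$; consequently the Frenet curvature vector of such a line is $\D\bm T/\D s=(\bm n\cdot\nabla)\bm n=k\bm N$, which is automatically tangent to $\Sigma_\varphi$ (since $\bm n\cdot(\bm n\cdot\nabla)\bm n=\tfrac12(\bm n\cdot\nabla)|\bm n|^2=0$) and independent of the direction of traversal. So it suffices to prove the flat-space identity $(\bm n\cdot\nabla)\bm n=\nabla_\Sigma\log E$. To establish it I would start from $\bm n=\pm\nabla\varphi/E$ and take the curl: since $\nabla\times\nabla\varphi=\mathbf 0$ the sign drops out and one obtains $\nabla\times\bm n=-\nabla\log E\times\bm n$. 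Feeding this into the unit-vector identity $(\bm n\cdot\nabla)\bm n=-\bm n\times(\nabla\times\bm n)$ and expanding via the vector triple-product identity $\bm a\times(\bm b\times\bm c)=\bm b(\bm a\cdot\bm c)-\bm c(\bm a\cdot\bm b)$ yields $(\bm n\cdot\nabla)\bm n=\nabla\log E-(\bm n\cdot\nabla\log E)\bm n=\nabla_\Sigma\log E$, as desired; adding back the normal part $2H\bm n$ then gives \eqref{eq:gradlnE}.

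Alternatively, and more in keeping with the curvilinear apparatus already set up, one can argue entirely in coordinates. Differentiating the orthogonality relation $\bm n\cdot\partial_k\bm r=0$ in $\varphi$ and substituting $\partial_0\bm r=-\bm n/E$ from \eqref{eq:GM_flow} gives $\partial_0\bm n=-E^{-2}g^{jk}(\partial E/\partial u^k)\partial_j\bm r$. Since the $\bm E$-line is the $\varphi$-coordinate curve with tangent $\bm T=-\bm n$ and arc-length relation $\D\varphi/\D s=E$, its curvature vector is $\D\bm T/\D s=-E\,\partial_0\bm n=g^{jk}(\partial\log E/\partial u^k)\partial_j\bm r=\nabla_\Sigma\log E$, where the coefficient is exactly the one appearing in $\Gamma^j_{00}$ of Proposition~\ref{prop:conn0}. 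This route has the advantage of tying the result directly to the connection coefficients computed earlier.

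The one genuine subtlety to watch is the sign ambiguity $\bm n=\pm\nabla\varphi/E$ (the outward normal is anti-parallel to $\nabla\varphi$ in 3-exD): I expect the main point to verify is that every step — the curl relation, the unit-vector identity, and the identification of $\D\bm T/\D s$ as the curvature vector — is invariant under $\bm n\mapsto-\bm n$, so that \eqref{eq:gradlnE} holds independently of orientation conventions. Once that invariance is checked, the remainder is routine vector calculus.
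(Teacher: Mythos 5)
Your proposal is correct, but your primary route is genuinely different from the paper's. Both arguments handle the normal component identically, by quoting $\bm n\cdot\nabla\log E=2H$ from \eqref{eq:2H}, and both reduce the corollary to identifying the tangential gradient of $\log E$ with the curvature vector of the field line. For that tangential identity the paper stays inside its curvilinear apparatus: it applies the Laplacian decomposition \eqref{eq:LapOp} to the position vector, uses $\Delta\bm r=\mathbf 0$ together with $\Delta_\Sigma\bm r=2H\bm n$ and the flow equation \eqref{eq:GM_flow}, and reads off $k\bm N=-E\partial_\varphi\bm n=D\log E$ in \eqref{eq:E_line_k}. Your main route is instead coordinate-free vector calculus: the curl relation $\nabla\times\bm n=-\nabla\log E\times\bm n$, the unit-vector identity $(\bm n\cdot\nabla)\bm n=-\bm n\times(\nabla\times\bm n)$, and the triple-product expansion. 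This is essentially the ``elementary vector analysis'' derivation that the paper's own Remark (immediately after the corollary) attributes to Jackson; it is more elementary, manifestly invariant under $\bm n\mapsto-\bm n$, and it makes transparent that harmonicity of $\varphi$ enters only through the normal component \eqref{eq:2H} --- whereas the paper deliberately reruns the computation through its connection coefficients and Laplacian as a consistency check of \S\ref{subsec:curv_coord}. Your alternative coordinate route lands on the same key relation as the paper, namely $\partial_\varphi\bm n=D(1/E)$ (recorded in the paper's Remark right after \eqref{eq:E_line_k}), but reaches it by differentiating the orthogonality relations rather than via $\Delta\bm r=\mathbf 0$; two small touch-ups there: you should note $\bm n\cdot\partial_\varphi\bm n=0$ (from $|\bm n|=1$) before expanding $\partial_\varphi\bm n$ in the tangential basis $\partial_j\bm r$, and the coefficient you obtain is $-E\Gamma^j_{00}$ rather than $\Gamma^j_{00}$ itself. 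Neither point affects the validity of your argument.
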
\begin{proof}As we already have the normal derivative $\bm n\cdot\nabla\log E=2H $ in  \eqref{eq:2H}, it is sufficient to show that the tangential gradient $[\bm n\times \nabla\log
E(\bm r)]\times\bm n=g^{jm}(\partial_m\log E)\partial_j\bm r$ is equal to $ k\bm N$. To fulfill this task, we compute\begin{align}\label{eq:E_line_k}\mathbf 0={}&\Delta\bm r=\Delta _{\Sigma }\bm r+E^{2}\frac{\partial ^{2}\bm r}{\partial \varphi ^{2}}-\frac{1%
}{E}g^{jm}\frac{\partial E}{\partial u^{m}}\frac{\partial\bm r }{\partial u^{j}}\notag\\={}&2H\bm n-E^2\frac{\partial(\bm n/E)}{\partial \varphi}-g^{jm}\frac{\partial\log E}{\partial u^{m}}\frac{\partial\bm r }{\partial u^{j}}=-E\frac{\partial\bm n}{\partial \varphi}-g^{jm}\frac{\partial\log E}{\partial u^{m}}\frac{\partial\bm r }{\partial u^{j}}, \end{align} where the definition for the curvature of a curve $ k\bm N=-E\partial_0\bm n$ can be substituted in the last step.\end{proof}\begin{remark}The result in \eqref{eq:gradlnE} is well known in physics, as the tangential and normal components of $\nabla\log E $ can be easily derived from elementary vector analysis \cite[p.~591]{Jackson:EM}
and the Gau{\ss} theorem of electrostatic field \cite[p.~52, Problem 1.11]{Jackson:EM},
respectively. We have rederived  \eqref{eq:gradlnE} in our curvilinear coordinate system as a double check of the computations involving the connection coefficients and the Laplacian.

Later on, we will often use the notation $Df:=g^{ij}\partial_if\partial_j\bm r $ for the tangential gradient of a smooth function $f$. This allows us to abbreviate \eqref{eq:E_line_k} as $\partial_\varphi\bm n=D(1/E)$ for the Gau{\ss}--Maxwell flow. It follows immediately from \eqref{eq:E_line_k}  that \begin{align}
\partial_{\varphi}\bm E=\partial_\varphi(E\bm n)=ED(1/E)-2H\bm n=-\nabla\log E.\label{eq:d_phi_E_vec}
\end{align}It is also easy to verify, for the Gau{\ss}--Maxwell flow,  that the following commutation relation holds:\begin{align}\label{eq:comm_rel}
\partial _{\varphi}(Df)-D(\partial_{\varphi}f)=-\frac{\hat W Df}{E}-\bm n(Df)\cdot \left(D\frac{1}{E}\right):=-\frac{ g^{ik}b^j_k\partial_if\partial_j\bm r}{E}-\bm ng^{ij}\partial_if\partial_j\frac1E.
\end{align}In particular,  \eqref{eq:E_line_k} and the commutation relation  above would entail\begin{align} \label{eq:Lap_diff_n}\Delta\bm n-\Delta_\Sigma\bm n=E^2\partial_\varphi [D(1/E)]+\hat W D\log E=2DH+2(\hat W-2H) D\log E-\bm n|D\log E|^{2},\end{align}a formula that will be used later in \S\ref{subsec:Lap_n_H}.\eor\end{remark}\subsection{Evolution of mean and Gaussian curvatures on  equipotential surfaces\label{subsec:Lap_n_H}} Since we will be interested in tracking down the changes of curvatures across different equipotential surfaces, it is sensible to derive formulae for the the derivatives of curvatures with respect to the potential $\varphi$.  \begin{proposition}[Evolution of the second fundamental form]\label{prop:2nd_fund_ev}We have the following identities
\begin{align}\frac{\partial b_{ij}}{\partial \varphi }=( b_{j}^{k}b_{ki}- \partial_i\partial_j+\Gamma_{ij}^k\partial_k) \frac{1}{E}\label{eq:bij}\end{align}and\begin{align}2\frac{\partial
H}{\partial\varphi}=-\Delta_\Sigma\frac1E-\frac{4H^2-2K}{E}.\label{eq:Hphi}\end{align}
\end{proposition}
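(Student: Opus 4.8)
The plan is to prove the two identities in sequence: first establish \eqref{eq:bij} by differentiating the defining relation $b_{ij}=\partial_i\partial_j\bm r\cdot\bm n$ along the Gau{\ss}--Maxwell flow, and then obtain \eqref{eq:Hphi} by contracting \eqref{eq:bij} against the (evolving) contravariant metric $g^{ij}$. Throughout I would identify $\partial_\varphi$ with $\partial_0$ and freely commute the coordinate partials, $\partial_\varphi\partial_i\partial_j=\partial_i\partial_j\partial_\varphi$, since $\bm r(\varphi,u,v)$ is smooth.

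For \eqref{eq:bij} I would differentiate $b_{ij}=\partial_i\partial_j\bm r\cdot\bm n$ and use the flow equation \eqref{eq:GM_flow} in the form $\partial_\varphi\bm r=-\bm n/E$ to write
\begin{align*}
\partial_\varphi b_{ij}=\left[\partial_i\partial_j\left(-\frac{\bm n}{E}\right)\right]\cdot\bm n+\partial_i\partial_j\bm r\cdot\partial_\varphi\bm n.
\end{align*}
The second term is dispatched with the Gau{\ss} formula $\partial_i\partial_j\bm r=\Gamma_{ij}^k\partial_k\bm r+b_{ij}\bm n$ together with $\partial_\varphi\bm n=D(1/E)$ from \eqref{eq:E_line_k}: because $D(1/E)$ is tangential, only the $\Gamma_{ij}^k\partial_k\bm r$ part contributes, and it produces exactly $\Gamma_{ij}^k\partial_k(1/E)$. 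For the first term I would expand the two derivatives of $(-\bm n/E)$, project onto $\bm n$, and discard the cross terms via $\bm n\cdot\bm n=1$ and $\partial_i\bm n\cdot\bm n=0$, leaving only $-\partial_i\partial_j(1/E)$ and the purely normal piece $-(1/E)\,\partial_i\partial_j\bm n\cdot\bm n$. Assembling these three contributions reproduces \eqref{eq:bij}.

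The one step that genuinely needs care is the normal projection $\partial_i\partial_j\bm n\cdot\bm n$, which is where the quadratic curvature term $b_j^kb_{ki}/E$ in \eqref{eq:bij} is born and where a sign or index slip is easiest to commit. I would evaluate it by differentiating $\partial_j\bm n\cdot\bm n=0$ to get $\partial_i\partial_j\bm n\cdot\bm n=-\partial_i\bm n\cdot\partial_j\bm n$, and then inserting the Weingarten formula $\partial_i\bm n=-b_i^k\partial_k\bm r$, which yields $\partial_i\partial_j\bm n\cdot\bm n=-b_i^k b_{kj}=-b_j^k b_{ki}$ by symmetry of the second fundamental form. Multiplying by $-1/E$ then delivers precisely $b_j^k b_{ki}/E$. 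Everything else in this part is routine bookkeeping with the metric.

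For \eqref{eq:Hphi} I would start from $2H=g^{ij}b_{ij}$ and differentiate, handling the metric factor with the formula $\partial_\varphi g^{ij}=-2g^{ik}b_k^j/E$ from the Remark following Proposition~\ref{prop:conn0}, and $\partial_\varphi b_{ij}$ with the freshly proved \eqref{eq:bij}. Contracting the derivative terms $(-\partial_i\partial_j+\Gamma_{ij}^k\partial_k)(1/E)$ in \eqref{eq:bij} against $g^{ij}$ produces exactly $-\Delta_\Sigma(1/E)$ by the definition of the surface Laplacian, while both the metric-variation contribution and the quadratic term collapse to traces of $\hat W^2$, using $g^{ik}b_k^j b_{ij}=b_k^j b_j^k=\Tr(\hat W^2)$ and $g^{ij}b_j^k b_{ki}=\Tr(\hat W^2)$. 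The final simplification is the two-dimensional Cayley--Hamilton identity $\Tr(\hat W^2)=(\Tr\hat W)^2-2\det\hat W=4H^2-2K$; the two trace contributions then enter with coefficients $-2$ and $+1$ and combine into the single term $-(4H^2-2K)/E$, giving \eqref{eq:Hphi}. I expect no real obstacle beyond keeping the raising and lowering of indices consistent, the only conceptual input being the recognition of the $\Tr(\hat W^2)$ structure and the $2\times2$ trace relation.
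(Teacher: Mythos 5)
Your proposal is correct; both identities come out with the right signs and coefficients. Your treatment of \eqref{eq:Hphi} coincides with the paper's: contract \eqref{eq:bij} with $g^{ij}$, handle the metric variation with $\partial_\varphi g^{ij}=-2g^{ik}b^j_k/E$, recognize $g^{ij}(-\partial_i\partial_j+\Gamma^k_{ij}\partial_k)\frac1E=-\Delta_\Sigma\frac1E$, and invoke $\Tr(\hat W^2)=4H^2-2K$. For \eqref{eq:bij}, however, your route is organized differently. The paper never differentiates the definition $b_{ij}=\partial_i\partial_j\bm r\cdot\bm n$; it applies the commutation $\partial_0(\partial_i\partial_j\bm r)=\partial_i(\partial_0\partial_j\bm r)$ to the expansion $\partial_\mu\partial_\nu\bm r=\Gamma^\lambda_{\mu\nu}\partial_\lambda\bm r$, extracts the $\partial_0\bm r$-component to obtain the purely algebraic identity $\partial_0\Gamma^0_{ij}+\Gamma^\nu_{ij}\Gamma^0_{\nu 0}=\partial_i\Gamma^0_{j0}+\Gamma^\nu_{j0}\Gamma^0_{\nu i}$, and then substitutes the connection coefficients of Proposition \ref{prop:conn0}; there the quadratic term $b^k_jb_{ki}/E$ is born from the product $\Gamma^k_{j0}\Gamma^0_{ki}$. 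You instead differentiate $b_{ij}=\partial_i\partial_j\bm r\cdot\bm n$ along the flow, using $\partial_\varphi\bm r=-\bm n/E$, the tangency of $\partial_\varphi\bm n=D(1/E)$ quoted from the Remark, the Gau{\ss} and Weingarten formulae, and the projection identity $\partial_i\partial_j\bm n\cdot\bm n=-\partial_i\bm n\cdot\partial_j\bm n=-b^k_jb_{ki}$, which is where your quadratic term arises. The two computations rest on the same underlying fact (mixed partials of $\bm r$ commute, projected along the normal), but yours is more elementary and self-contained: it needs neither $\Gamma^0_{j0}$, $\Gamma^0_{00}$, nor the Christoffel formula, only the surface-theoretic apparatus plus the flow identity, at the cost of the extra lemma on $\partial_i\partial_j\bm n\cdot\bm n$. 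The paper's version, by contrast, is essentially a one-line substitution because it recycles the full table of connection coefficients already assembled in Proposition \ref{prop:conn0}.
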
\begin{proof}From the identity $
\partial_0(\partial_i\partial_j\bm
r)=\partial_i(\partial_0\partial_j\bm r)$, we may deduce   $\partial_0 \Gamma _{ij}^{0}+\Gamma _{ij}^{\nu }\Gamma _{\nu
0}^{0}={\partial_i \Gamma _{j0}^{0}}+\Gamma _{j0}^{\nu }\Gamma
_{\nu i}^0$. This results in \eqref{eq:bij}, upon substitution of the connection coefficients. Combining $2H=g^{ij}b_{ij}$ and  $\partial_0 g^{ij}=-2g^{ik}b_k^j/E$
with \eqref{eq:bij}, we obtain\begin{equation*}2\frac{\partial
H}{\partial\varphi}=-\Delta_\Sigma\frac1E-\frac{b_j^kb_k^j}{E},\quad\text{where  } b_j^kb_k^j=\Tr(\hat W^2)=4H^2-2K . \end{equation*}This verifies \eqref{eq:Hphi}.\end{proof}\begin{proposition}[Evolution of Gaussian curvature]\label{prop:GC_ev}We have the following formula \begin{align}\label{eq:Kphi1}\frac{\partial}{\partial\varphi}(K\sqrt{g})=-\partial_i\left( \beta^{ij}\sqrt{g}\partial_j\frac{1}{E}\right)\Longleftrightarrow\frac{\partial}{\partial\varphi}\frac{K}{E}=-\frac{1}{E\sqrt{g}}\partial_i\left( \beta^{ij}\sqrt{g}\partial_j\frac{1}{E}\right)\end{align}for $\beta^{ij}:=2Hg^{ij}-g^{ik}b^j_k=\crb^{ij}/g$,  where $(\crb^{ij})=\left(\begin{smallmatrix}b_{22} & -b_{12} \\
-b_{12} & b_{11} \\
\end{smallmatrix}\right)$ is the adjugate matrix of $(b_{ij}) $. \end{proposition}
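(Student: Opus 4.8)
The plan is to write $ K\sqrt{g}=\det(b_{ij})/\sqrt{g}$ (using the surface relation $ K=\det(b_{ij})/g$), differentiate in $ \varphi$, and then recast the outcome as the total divergence on the right of \eqref{eq:Kphi1}. The two ingredients I would prepare first are the volume-element evolution $ \partial_\varphi\sqrt{g}=(2H/E)\sqrt{g}$, read off from the second identity in \eqref{eq:4HE}, and Jacobi's rule for differentiating a determinant, $ \partial_\varphi\det(b_{ij})=\crb^{ij}\partial_\varphi b_{ij}$, valid precisely because $ (\crb^{ij})$ is the adjugate (cofactor) array of the symmetric $ 2\times2$ matrix $ (b_{ij})$. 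Substituting the evolution \eqref{eq:bij} of the second fundamental form then breaks $ \partial_\varphi\det(b_{ij})$ into an algebraic part $ \tfrac1E\crb^{ij}b_j^kb_{ki}$ and a differential part $ -\crb^{ij}(\partial_i\partial_j-\Gamma_{ij}^k\partial_k)\tfrac1E$.

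Next I would dispose of the algebraic part. Using the adjugate identity $ \crb^{ij}b_{j\ell}=\det(b_{ij})\,\delta^i_\ell$ together with $ g^{ij}b_{ij}=2H$, one gets $ \crb^{ij}b_j^kb_{ki}=2H\det(b_{ij})$. Since $ \partial_\varphi(K\sqrt g)=\partial_\varphi\det(b_{ij})/\sqrt g-(\det(b_{ij})/g)\,\partial_\varphi\sqrt g$, the contribution $ \tfrac1{\sqrt g}\cdot\tfrac{2H\det(b_{ij})}{E}=\tfrac{2H}{E}K\sqrt g$ coming from this algebraic part cancels exactly against $ (\det(b_{ij})/g)\,\partial_\varphi\sqrt g=\tfrac{2H}{E}K\sqrt g$. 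What survives is the clean expression $ \partial_\varphi(K\sqrt g)=-\tfrac1{\sqrt g}\crb^{ij}(\partial_i\partial_j-\Gamma_{ij}^k\partial_k)\tfrac1E$. Recognizing $ (\partial_i\partial_j-\Gamma_{ij}^k\partial_k)(1/E)$ as the covariant Hessian $ \nabla_i\nabla_j(1/E)$ on $ \Sigma$ and using $ \crb^{ij}=g\,\beta^{ij}$, this reads $ \partial_\varphi(K\sqrt g)=-\sqrt g\,\beta^{ij}\nabla_i\nabla_j(1/E)$.

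The remaining, and decisive, step is to promote this Hessian contraction into the divergence $ -\partial_i(\beta^{ij}\sqrt g\,\partial_j(1/E))$. Expanding that divergence by the product rule reproduces $ -\sqrt g\,\beta^{ij}\nabla_i\nabla_j(1/E)$ \emph{provided} the weighted tensor is conservative, i.e. $ \partial_i(\sqrt g\,\beta^{ij})=-\sqrt g\,\Gamma^j_{ik}\beta^{ik}$, equivalently $ \nabla_i\beta^{ij}=0$. This divergence-free property of $ \beta^{ij}=2Hg^{ij}-g^{ik}b^j_k$, the first Newton transformation of the Weingarten map, is exactly the Codazzi--Mainardi equation for a surface in flat $ \mathbb R^3$: contracting $ \nabla_i b_{jk}=\nabla_j b_{ik}$ with $ g^{jk}$ gives $ 2\nabla_i H=\nabla_j b_i^{\,j}$, whence $ \nabla_i\beta^{ij}=2g^{ij}\nabla_iH-\nabla_i b^{ij}=0$. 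I expect this to be the main obstacle, both because it needs the Codazzi relation (which within the paper's framework must either be cited or extracted from the same integrability identity $ \partial_0\partial_i\partial_j\bm r=\partial_i\partial_0\partial_j\bm r$ that yielded \eqref{eq:bij}), and because matching $ \crb^{ij}/g$ with $ \beta^{ij}$ and relabelling the contracted dummy indices correctly is where sign and bookkeeping slips are likeliest.

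Finally, the equivalence ``$ \Longleftrightarrow$'' in \eqref{eq:Kphi1} would follow from the flow identity $ \partial_\varphi(K\sqrt g)=E\sqrt g\,\partial_\varphi(K/E)$. Indeed, combining $ \partial_\varphi\sqrt g=(2H/E)\sqrt g$ with $ \partial_\varphi(1/E)=2H/E^2$ (the latter from \eqref{eq:2H}), one has $ \partial_\varphi(K\sqrt g)=\sqrt g\,\partial_\varphi K+(2H/E)K\sqrt g$ and $ \partial_\varphi(K/E)=(1/E)\partial_\varphi K+(2H/E^2)K$, which are proportional with factor $ E\sqrt g$; dividing the first form of \eqref{eq:Kphi1} by $ E\sqrt g$ gives the second. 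As a consistency check, integrating the first form over the closed surface $ \Sigma$ kills the right-hand side, so $ \oint_\Sigma K\,\D S$ is independent of $ \varphi$, in agreement with the Gauss--Bonnet theorem.
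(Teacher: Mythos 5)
Your proposal is correct and takes essentially the same route as the paper: Jacobi's formula applied to $K\sqrt{g}=\det(b_{ij})/\sqrt{g}$, substitution of \eqref{eq:bij} with the algebraic cancellation $\crb^{ij}b_{j}^{k}b_{ki}/g=2HK$ against the $\partial_\varphi\sqrt{g}$ term, and then the Codazzi--Mainardi equation (plus metric compatibility) to show $\beta^{ij}$ is divergence-free, which is exactly the paper's identity \eqref{eq:Kphi3} converting the contracted covariant Hessian into the total divergence $-\partial_i\bigl(\beta^{ij}\sqrt{g}\,\partial_j\tfrac{1}{E}\bigr)$. Your explicit check of the ``$\Longleftrightarrow$'' via $\partial_\varphi(K\sqrt{g})=E\sqrt{g}\,\partial_\varphi(K/E)$ and the Gauss--Bonnet consistency remark are welcome additions that the paper leaves implicit or defers to a remark.
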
\begin{proof}Using Jacobi's formula for the derivative of a determinant, we may verify that\begin{align}\label{eq:Kphi2}\frac{\partial}{\partial\varphi}(K\sqrt{g})={}&\frac{\partial}{\partial\varphi}\left(\frac{\det(b_{ij})}{\sqrt{g}}\right)=\frac{\crb^{ij}}{\sqrt{g}}\frac{\partial b_{ij}}{\partial\varphi}-\frac{2HK\sqrt{g}}{E}\notag\\={}&\frac{\crb^{ij}}{\sqrt{g}}( b_{j}^{k}b_{ki}- \partial_i\partial_j+\Gamma_{ij}^m\partial_m) \frac{1}{E}-\frac{2HK\sqrt{g}}{E}=\beta^{ij}\sqrt{g}( - \partial_i\partial_j+\Gamma_{ij}^m\partial_m) \frac{1}{E},\end{align}where we have quoted \eqref{eq:bij} in the penultimate step, before using the  relation $ \crb^{ij} b_{j}^{k}b_{ki}/g=K\delta^{j}_kb^k_j=2HK$ in the last step. Then, we note that the  Codazzi--Mainardi equation $\partial_kb_{ij}-\partial_jb_{ik}+\Gamma^\ell_{ij}b_{\ell k}-\Gamma^\ell_{ik}b_{\ell j}=0$ and the vanishing covariant derivatives of the metric  $ (g^{ik})_{;\ell}:=\partial_\ell g^{ik}+g^{im}\Gamma^k_{m\ell}+g^{km}\Gamma^i_{m\ell}=0$ allow us to compute $\partial_i b^k_j+\Gamma^k_{i\ell}b^\ell_j-\Gamma^\ell_{ij}b^k_\ell=:b^k_{j;i}=b^k_{i;j} $ and $ (\beta^{ik})_{;k}=b^\ell_{\ell;k} g^{ik}-g^{ij}b^k_{j;k}=b^\ell_{\ell;k} g^{ik}-g^{ij}b^k_{k;j}=0$, thereby leading to \begin{align}-\partial_i\left( \beta^{ij}\sqrt{g}\partial_jf\right)={}&\beta^{ij}\sqrt{g}( - \partial_i\partial_j+\Gamma_{ij}^m\partial_m) f,\label{eq:Kphi3} \end{align}for every smooth function $f$. Combining the results in \eqref{eq:Kphi2} and \eqref{eq:Kphi3}, we arrive at the claimed formula in \eqref{eq:Kphi1}.
\end{proof}\begin{remark}When the field intensity $E$ is non-vanishing on an entire equipotential surface $\Sigma_\varphi $, we may double-check the reasonability of \eqref{eq:Kphi1} by the following computation\begin{align}\label{eq:GB0}\frac{\D}{\D \varphi}\oint_{\Sigma_\varphi}K\D S=\oint_{\Sigma_\varphi}\frac{1}{\sqrt{g}}\frac{\partial}{\partial\varphi}(K\sqrt{g})\D S=-\oint_{\Sigma_\varphi}\frac{1}{\sqrt{g}}\partial_i\left( \beta^{ij}\sqrt{g}\partial_j\frac{1}{E}\right)\D S=0.\end{align} On the other hand, we know from the Gau{\ss}--Bonnet theorem that $ \oint_{\Sigma_\varphi}K\D S=2\pi\chi(\Sigma_\varphi)$, where $ \chi(\Sigma_\varphi)$ is the Euler--Poincar\'e characteristic that determines the topology of $ \Sigma_\varphi$. The result in \eqref{eq:GB0} is thus expected from the non-critical $\bm E $-lines that establish diffeomorphisms among all the equipotential surfaces in a neighborhood of  $ \Sigma_\varphi$. \eor\end{remark}
\begin{corollary}[Weatherburn formula {\cite[p. 231]{Weatherburn1}}]The following identity holds on every smooth surface  \begin{align}\Delta_\Sigma\bm n=(2K-4H^2)\bm n-2(\bm n\times\nabla H)\times\bm n.\label{eq:surf_Lap_n}\end{align}   \end{corollary}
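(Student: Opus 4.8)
The plan is to compute $\Delta_\Sigma\bm n$ directly from the definition $\Delta_\Sigma=g^{ij}(\partial_i\partial_j-\Gamma_{ij}^k\partial_k)$, applied componentwise to the Gauss map $\bm n$, and to read off its tangential and normal parts separately. First I would differentiate the Weingarten formula $\partial_j\bm n=-b_j^\ell\partial_\ell\bm r$ once more and substitute the Gauss formula $\partial_i\partial_\ell\bm r=\Gamma_{i\ell}^k\partial_k\bm r+b_{i\ell}\bm n$, obtaining
\[\partial_i\partial_j\bm n=-(\partial_i b_j^\ell)\partial_\ell\bm r-b_j^\ell\Gamma_{i\ell}^k\partial_k\bm r-b_j^\ell b_{i\ell}\bm n.\]
Subtracting $\Gamma_{ij}^k\partial_k\bm n=-\Gamma_{ij}^k b_k^\ell\partial_\ell\bm r$ and contracting against $g^{ij}$ then cleanly splits $\Delta_\Sigma\bm n$ into a component proportional to $\bm n$ and a component spanned by the tangent frame $\{\partial_m\bm r\}$.

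For the normal part, the coefficient of $\bm n$ is $-g^{ij}b_j^\ell b_{i\ell}=-\Tr(\hat W^2)$, which by the identity $\Tr(\hat W^2)=4H^2-2K$ already recorded in the proof of Proposition~\ref{prop:2nd_fund_ev} contributes exactly $(2K-4H^2)\bm n$; this step is routine. The tangential part is the crux: collecting the coefficient of $\partial_m\bm r$ gives $g^{ij}(-\partial_i b_j^m-b_j^\ell\Gamma_{i\ell}^m+\Gamma_{ij}^k b_k^m)$, which I would recognize as $-g^{ij}b^m_{j;i}$, where $b^m_{j;i}$ is precisely the covariant derivative of the Weingarten tensor introduced in the proof of Proposition~\ref{prop:GC_ev}.

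The remaining work is to contract the covariant derivative of the shape operator. Using metric compatibility $(g^{ij})_{;k}=0$ to factor out a raised index, together with the Codazzi--Mainardi equation (which renders $b_{ij;k}$ totally symmetric in its three lower indices), I can rearrange indices to get $g^{ij}b^m_{j;i}=g^{m\ell}g^{ij}b_{ij;\ell}=g^{m\ell}\partial_\ell(g^{ij}b_{ij})=2g^{m\ell}\partial_\ell H$, that is, the contracted Codazzi identity expressing that the divergence of $\hat W$ equals $2\nabla H$. Hence the tangential part equals $-2g^{m\ell}(\partial_\ell H)\partial_m\bm r=-2\,DH$, and since $DH$ is by definition the tangential gradient of $H$, it coincides with $(\bm n\times\nabla H)\times\bm n$. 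Assembling the two parts yields the stated Weatherburn formula.

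I expect the only genuine obstacle to be the bookkeeping in the tangential part: correctly organizing the derivatives of the mixed curvature tensor so as to identify the coefficient with $-g^{ij}b^m_{j;i}$, and then applying the contracted Codazzi equation with the index symmetries handled faithfully. Everything else reduces to substituting identities established earlier in \S\ref{subsec:curv_coord}. I would also emphasize that this derivation is purely intrinsic and invokes nothing about the harmonic function or the field intensity $E$, which is consistent with the assertion that the formula holds on every smooth surface.
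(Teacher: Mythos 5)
Your proof is correct, but it takes a genuinely different route from the paper's. You compute $\Delta_\Sigma\bm n$ directly from the definition $g^{ij}(\partial_i\partial_j-\Gamma_{ij}^k\partial_k)$, differentiating the Weingarten formula and substituting the Gauss formula, then split the result into a normal part, settled by $\Tr(\hat W^2)=4H^2-2K$, and a tangential part, identified as $-g^{ij}b^m_{j;i}\partial_m\bm r$ and settled by the contracted Codazzi identity $g^{ij}b^m_{j;i}=2g^{m\ell}\partial_\ell H$; all of these index manipulations check out. The paper instead reuses the divergence-form identity \eqref{eq:Kphi3}, i.e.\ $(\beta^{ik})_{;k}=0$ from the proof of Proposition~\ref{prop:GC_ev}: applying it to the position vector yields Minkowski's formula \eqref{eq:MinTech}, $\frac{1}{\sqrt g}\partial_i(\beta^{ij}\sqrt g\,\partial_j\bm r)=2K\bm n$, and then the algebraic relation $g^{ij}\partial_j\bm n-\beta^{ij}\partial_j\bm r=-2Hg^{ij}\partial_j\bm r$ lets one write $\Delta_\Sigma\bm n-2K\bm n$ as the single divergence $-\frac{1}{\sqrt g}\partial_i(2Hg^{ij}\sqrt g\,\partial_j\bm r)$, which expands to $-4H^2\bm n-2DH$ via $\Delta_\Sigma\bm r=2H\bm n$. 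The two proofs run on the same engine in disguise, since $(\beta^{ik})_{;k}=0$ is exactly the contracted Codazzi equation you invoke, and both borrow from the proof of Proposition~\ref{prop:GC_ev}; what differs is the packaging. Yours is the classical, self-contained componentwise computation, at the price of the index bookkeeping you flagged; the paper's trades that bookkeeping for the divergence structure it has already built, which keeps everything in flux form and foreshadows the integrations by parts performed with $\beta^{ij}$ in Corollary~\ref{cor:betaij_int}.
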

\begin{proof}Applying \eqref{eq:Kphi3} to the three Euclidean components of $ \bm r=x\bm e_x+y\bm e_y+z\bm e_z$, we can quickly  recover the following formula of Minkowski~\cite{Minkowski1903}:\begin{align}\label{eq:MinTech}\frac{1}{\sqrt{g}}\partial_i\left( \beta^{ij}\sqrt{g}\partial_j\bm r\right)=2K\bm n,\end{align}with the computation\begin{align}\frac{1}{\sqrt{g}}\partial_i\left( \beta^{ij}\sqrt{g}\partial_j\bm r\right)=\beta^{ij}(  \partial_i\partial_j-\Gamma_{ij}^m\partial_m)\bm r=\frac{\crb^{ij}b_{ij}\bm n}{g}=\frac{2\det(b_{ij})}{g}\bm n=2K\bm n.\end{align}This in turn allows us to verify the   Weatherburn formula via\begin{align}\Delta_\Sigma\bm n-2K\boldsymbol n=\frac{1}{\sqrt{g}}\partial_i\left( g^{ij}\sqrt{g}\partial_j\bm n-\beta^{ij}\sqrt{g}\partial_j\bm r\right)=-\frac{1}{\sqrt{g}}\partial_i( 2Hg^{ij}\sqrt{g}\partial_j\bm r)=-4H^2\bm n-2DH,\end{align}where we have exploited $ g^{ij}\sqrt{g}\partial_j\bm n=-g^{ik}b^j_k\sqrt{g}\partial_j\bm r$, $\beta^{ij}:=2Hg^{ij}-g^{ik}b^j_k $ and  a familiar relation $ \Delta_\Sigma\bm r=\frac{1}{\sqrt g}\partial_i(g^{ij}\sqrt{g}\partial_j\bm r)=2H\bm n$.\end{proof}

 Combining  \eqref{eq:Lap_diff_n} and \eqref{eq:surf_Lap_n}, we immediately arrive at the  following representation of $ \Delta\bm n:=\bm e_x\Delta(\bm n\cdot\bm e_x)+\bm e_y\Delta(\bm n\cdot\bm e_y)+\bm e_z\Delta(\bm n\cdot\bm e_z)$:\begin{align}
\Delta\bm n=2(\hat W-2H) D\log E-\bm n(|D\log E|^{2}+4H^2-2K),\label{eq:Lap_n}
\end{align}a result that will be used later in Corollary~\ref{cor:Lap_n_E}.

\section{Main result and applications\label{sec:3Dconv_ineq}}Like previous studies of level sets for harmonic functions \cite{Colding2012,ColdingMinicozzi2013,ColdingMinicozzi2014,MaZhang2014,Geom2D}, we will build a monotonicity result (\S\ref{subsec:mono_int}) on positive definite quadratic forms, before subsequently applying it to the proof of Theorem \ref{thm:quasi-csv} and Corollary \ref{cor:free_bd} in \S\ref{subsec:geo_ineq}. We will finally devote \S\ref{subsec:rel_prob} to some generalizations of Corollary \ref{cor:free_bd}.
\subsection{Monotonicity of an integral on  equipotential surface\label{subsec:mono_int}}To prepare for the proof of  Theorem~\ref{thm:quasi-csv}, we compute  two more quantities: $ \Delta\log E$ and $ \Delta\frac{\bm n}{E}$. Both these quantities vanish in two-dimensional electrostatic problems, as one can easily check by complex analytic techniques. \begin{proposition}[Laplacian representation of Gaussian curvature]\label{prop:lnE_K}There is a geometric  identity \begin{align}\label{eq:dlnE2K_id} \Delta\log E+2K=0,\end{align} which is a special case of \cite[Proposition 1.4]{Zhou2013LevelSet}.\end{proposition}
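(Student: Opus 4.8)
The plan is to read off $\Delta\log E$ directly from the Laplacian decomposition of Proposition~\ref{prop:Lap_decomp}, specialized to the scalar $f=\log E$. Writing \eqref{eq:LapOp} out for this choice produces exactly three contributions: a surface-Laplacian term $\Delta_\Sigma\log E$, a second-order flow term $E^2\partial_\varphi^2\log E$, and a first-order tangential (drift) term $-\tfrac1E g^{jm}(\partial_m E)(\partial_j\log E)$. The whole strategy is to evaluate the two flow derivatives using the curvature-evolution machinery of \S\ref{subsec:Lap_n_H}, thereby isolating the $-2K$ on the right-hand side, and then to check that the remaining purely tangential pieces annihilate one another.

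First I would dispose of the $\varphi$-derivatives. The normal-derivative identity $\partial_\varphi E=-2H$ from \eqref{eq:2H} gives $\partial_\varphi\log E=-2H/E$ immediately. Differentiating once more along the Gau{\ss}--Maxwell flow and substituting the mean-curvature evolution equation \eqref{eq:Hphi}, I expect to obtain $\partial_\varphi^2\log E=\tfrac1E\Delta_\Sigma\tfrac1E-\tfrac{2K}{E^2}$, so that $E^2\partial_\varphi^2\log E=E\,\Delta_\Sigma\tfrac1E-2K$. This is the crucial step, since it is precisely here that the Gaussian curvature enters, through the trace identity $\Tr(\hat W^2)=4H^2-2K$ already used in the proof of Proposition~\ref{prop:2nd_fund_ev}; the $4H^2$ produced by \eqref{eq:Hphi} cancels against the $-4H^2/E^2$ coming from differentiating $-2H/E$, leaving only $-2K$ and the surface-Laplacian remainder.

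It then remains to show that the three tangential terms cancel. The key elementary input is the conformal-type relation $E\,\Delta_\Sigma\tfrac1E=-\Delta_\Sigma\log E+|D\log E|^2$, obtained by writing $1/E=e^{-\log E}$ and expanding $\Delta_\Sigma(e^{-\log E})=e^{-\log E}\big(-\Delta_\Sigma\log E+|D\log E|^2\big)$ by the product rule. Simultaneously, since $\partial_m E=E\,\partial_m\log E$, the drift term collapses to $-\tfrac1E g^{jm}(\partial_m E)(\partial_j\log E)=-|D\log E|^2$. Adding the three contributions, the two copies of $\Delta_\Sigma\log E$ cancel and the two copies of $|D\log E|^2$ cancel, leaving $\Delta\log E=-2K$, which is \eqref{eq:dlnE2K_id}.

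I expect the main obstacle to be bookkeeping rather than anything conceptual: the only genuinely nontrivial ingredient is the mean-curvature evolution \eqref{eq:Hphi}, which itself rests on the evolution \eqref{eq:bij} of the second fundamental form, while everything else is the conformal identity for $\Delta_\Sigma$ plus careful tracking of which tangential terms come from the flow term and which from the drift term of \eqref{eq:LapOp}. As a consistency check, the two-dimensional cylindrical case has $K\equiv0$, and the identity then reduces to the harmonicity of $\log|\nabla U|$ noted at the start of \S\ref{subsec:mono_int}; alternatively one could reach the same conclusion through the Bochner-type formula $\Delta\log E=\lvert\nabla^2U\rvert^2/E^2-2\lvert\nabla\log E\rvert^2$ (with $\lvert\nabla^2U\rvert$ the Hilbert--Schmidt norm of the Hessian) by diagonalizing the Hessian in a principal-curvature frame, but the decomposition route above has the advantage of staying entirely inside the framework already assembled in \S\ref{sec:geo_prep}.
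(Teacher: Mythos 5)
Your proposal is correct and is essentially the paper's own argument: the decisive step in both is the evaluation $E^2\partial_\varphi^2\log E=E\,\Delta_\Sigma\tfrac1E-2K$ obtained from \eqref{eq:2H} and \eqref{eq:Hphi}, which is exactly the paper's \eqref{eq:d2lnE}. The only difference is cosmetic bookkeeping of the tangential terms: where you apply the decomposition \eqref{eq:LapOp} to $\log E$ and then cancel $\Delta_\Sigma\log E$ and $|D\log E|^2$ via the conformal identity $E\,\Delta_\Sigma\tfrac1E=-\Delta_\Sigma\log E+|D\log E|^2$, the paper reaches the equivalent intermediate identity $\Delta\log E=-E\,\Delta_\Sigma\tfrac1E+E^{2}\partial_\varphi^{2}\log E$ in one stroke from the divergence form of the Laplacian, using $\tfrac1E\partial_\nu\log E=-\partial_\nu\tfrac1E$, in its \eqref{eq:DeltalnE}.
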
\begin{proof}We  first  employ  \eqref{eq:Hphi} to compute\begin{align}-\frac{\partial^2}{\partial\varphi^2}\log
E=\frac\partial{\partial\varphi}\left(\frac{2H}{E}\right)=\frac{4H^2}{E^2}+\frac2E\frac{\partial
H}{\partial\varphi}=-\frac1E\Delta_\Sigma\frac1E+\frac{2K}{E^2}.\label{eq:d2lnE}\end{align}Meanwhile, we may use the definition of Laplacian $\Delta$ in the curvilinear coordinate system to evaluate\begin{align}
 \Delta\log
E=\frac{E}{\sqrt{g}}{\partial_\mu}\left( \frac{\sqrt{g}}{E} g^{\mu\nu}\partial_\nu\log E\right)=-\frac{E}{\sqrt{g}}{\partial_\mu}\left({\sqrt{g}} g^{\mu\nu}\partial_\nu\frac1 E\right)=-E\Delta_\Sigma\frac1E+E^2\frac{\partial^2}{\partial\varphi^2}\log
E.\label{eq:DeltalnE}
\end{align}Combining \eqref{eq:d2lnE} with \eqref{eq:DeltalnE}, we arrive at the claimed identity.
\end{proof}
\begin{corollary}[A geometric representation of $ \Delta\frac{\bm n}{E}$]\label{cor:Lap_n_E}We have the following formula:\begin{align}
\Delta\frac{\bm n}{E}=4\left(\beta^{ij}\partial_i\frac{1}{E}\partial_j\bm r+\frac{K\bm n}{E}\right).
\end{align}\end{corollary}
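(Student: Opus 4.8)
The plan is to read $\bm n/E$ as the product of the scalar $1/E$ with the vector field $\bm n$ and to apply the Euclidean Leibniz rule for the Laplacian,
\[
\Delta\frac{\bm n}{E}=\left(\Delta\frac1E\right)\bm n+2\left(\nabla\frac1E\cdot\nabla\right)\bm n+\frac1E\Delta\bm n,
\]
computing the three contributions in turn and then invoking the representations \eqref{eq:Lap_n} and \eqref{eq:dlnE2K_id}. First I would dispose of the scalar factor: since $\Delta(1/E)=E^{-1}\bigl(|\nabla\log E|^{2}-\Delta\log E\bigr)$, inserting $\Delta\log E=-2K$ from Proposition~\ref{prop:lnE_K} together with the orthogonal splitting $|\nabla\log E|^{2}=|D\log E|^{2}+4H^{2}$ (afforded by $\bm n\cdot\nabla\log E=2H$ in \eqref{eq:2H}) yields $\Delta(1/E)=E^{-1}\bigl(2K+4H^{2}+|D\log E|^{2}\bigr)$.

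The delicate step, and the one I expect to be the main obstacle, is the cross term $2(\nabla\tfrac1E\cdot\nabla)\bm n$: because $\bm n$ is extended off $\Sigma$ along the Gau\ss--Maxwell flow, its directional derivative mixes a tangential part with a transverse one, and both must be tracked in the curvilinear frame. Writing the directional derivative as $(\nabla\tfrac1E\cdot\nabla)\bm n=g^{\mu\nu}(\partial_\nu\tfrac1E)\partial_\mu\bm n$ and using $g^{0i}=0$, $g^{00}=E^{2}$, I would separate the $\mu=\nu=0$ term from the surface sum. The Weingarten formula $\partial_i\bm n=-b_i^{k}\partial_k\bm r$ handles the surface sum, producing $-\hat W D(1/E)$; the transverse term is governed by $\partial_\varphi\bm n=D(1/E)$ from \eqref{eq:E_line_k} and by $\partial_\varphi(1/E)=2H/E^{2}$ (a consequence of $\partial_\varphi E=-2H$ in \eqref{eq:2H}), giving $2H\,D(1/E)$. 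After recalling $D(1/E)=-E^{-1}D\log E$, these collapse to
\[
2\left(\nabla\frac1E\cdot\nabla\right)\bm n=\frac2E(\hat W-2H)D\log E.
\]

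Finally I would assemble the three pieces. Substituting $\Delta\bm n$ from \eqref{eq:Lap_n}, the two tangential contributions are identical and add to $\frac4E(\hat W-2H)D\log E$, while the normal contributions combine as $\frac{\bm n}{E}\bigl[(2K+4H^{2}+|D\log E|^{2})-(|D\log E|^{2}+4H^{2}-2K)\bigr]=\frac{4K\bm n}{E}$. The last, cosmetic, move is to recognise the tangential vector as $4\beta^{ij}(\partial_i\tfrac1E)\partial_j\bm r$: from $\beta^{ij}=2Hg^{ij}-g^{ik}b^{j}_{k}$ and $D(1/E)=-E^{-1}D\log E$ one verifies $\beta^{ij}(\partial_i\tfrac1E)\partial_j\bm r=\frac1E(\hat W-2H)D\log E$, whence $\Delta\frac{\bm n}{E}=4\bigl(\beta^{ij}\partial_i\tfrac1E\,\partial_j\bm r+\tfrac{K\bm n}{E}\bigr)$, as claimed. (As a sanity check one notes $\bm n/E=-\partial_\varphi\bm r$, so the result is the negative of the commutator $[\Delta,\partial_\varphi]$ applied to the harmonic field $\bm r$; the non-commutativity of $\Delta$ with the curvilinear $\partial_\varphi$ is precisely what makes the answer nonzero.)
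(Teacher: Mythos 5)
Your proposal is correct and follows essentially the same route as the paper: the Leibniz expansion of $\Delta\frac{\bm n}{E}$, substitution of \eqref{eq:Lap_n} and $\Delta\log E=-2K$ from Proposition~\ref{prop:lnE_K}, and evaluation of the cross term $2\bigl(\nabla\tfrac1E\cdot\nabla\bigr)\bm n=\tfrac2E(\hat W-2H)D\log E$ via the Weingarten formula together with the Gau{\ss}--Maxwell flow relations. Your index-split computation of the cross term ($g^{0i}=0$, $\partial_\varphi\bm n=D(1/E)$, $\partial_\varphi(1/E)=2H/E^{2}$) is just a notational variant of the paper's decomposition $(\nabla\log E\cdot\nabla)\bm n=2H(\bm n\cdot\nabla)\bm n+(D\log E\cdot\nabla)\bm n$, and your final identification of the tangential part with $4\beta^{ij}\partial_i\tfrac1E\,\partial_j\bm r$ matches the paper's bookkeeping done inline.
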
\begin{proof}Combining our formula for $ \Delta\bm n$ in \eqref{eq:Lap_n} with the  identity    $ \Delta\log E+2K=0$, and noting that  $ (\nabla\log E\cdot\nabla)\bm n=2H(\bm n\cdot\nabla)\bm n+(D\log E\cdot\nabla)\bm n=2HD\log E-\hat WD\log E$, we can    compute \begin{align}\Delta\frac{\bm n}{E}={}&\frac{\Delta\bm n}{E}+\bm n\Delta\frac{1}{E}+2g^{\mu\nu}\partial_\mu\frac{1}{E}\partial_\nu\bm n\notag\\={}&2\beta^{ij}\partial_i\frac{1}{E}\partial_j\bm r-\frac{(4H^2-2K+|D\log E|^{2})\bm n}{E}-\bm n\nabla\cdot\left( \frac{1}{E}\nabla\log E\right)-\frac{2}{E}(\nabla\log E\cdot\nabla)\bm n\notag\\={}&2\beta^{ij}\partial_i\frac{1}{E}\partial_j\bm r+\frac{4K\bm n}{E}-\frac{2}{E}(2H-\hat W)D\log E=4\left(\beta^{ij}\partial_i\frac{1}{E}\partial_j\bm r+\frac{K\bm n}{E}\right),\label{eq:Lap_nE}\end{align}as claimed.
\end{proof}

\begin{corollary}[Evolution of a surface integral]\label{cor:betaij_int}We have the following derivative formula:\begin{align}
\frac{\D}{\D\varphi}\oint_{\Sigma_{\varphi}}\left( H^2 -K-\frac{|D\log E|^2}{4}\right)\frac{\D S}{E}=-\frac{3}{2}\oint_{\Sigma_\varphi}\beta^{ij}\partial_{i}\frac{1}{E}\partial_{j}\frac{1}{E}\D S.\label{eq:W1'-F1'}
\end{align}\end{corollary}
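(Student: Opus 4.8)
The plan is to differentiate the surface integral directly in the curvilinear coordinate system of \S\ref{subsec:curv_coord}, where the coordinates $(u^1,u^2)$ stay fixed as $\varphi$ varies, so that the coordinate partials $\partial_i$ and $\partial_\varphi$ commute. Since $\partial_\varphi\sqrt g=\frac{2H}{E}\sqrt g$ by \eqref{eq:4HE}, for any smooth $f$ on $\Sigma_\varphi$ one has $\frac{\D}{\D\varphi}\oint_{\Sigma_\varphi}f\,\D S=\oint_{\Sigma_\varphi}\bigl(\partial_\varphi f+\frac{2H}{E}f\bigr)\D S$. I would split the integrand as $\frac{H^2}{E}-\frac{K}{E}-\frac{|D\log E|^2}{4E}$ and treat the three resulting derivatives $I_1$, $I_2$, $I_3$ separately, repeatedly invoking $\partial_\varphi E=-2H$ and $\partial_\varphi\frac1E=\frac{2H}{E^2}$ from \eqref{eq:2H}, together with $\partial_\varphi g^{ij}=-2g^{ik}b_k^j/E$ from the remark after Proposition~\ref{prop:conn0}.

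For $I_1=\frac{\D}{\D\varphi}\oint\frac{H^2}{E}\D S$ I substitute \eqref{eq:Hphi} for $\partial_\varphi H$; after the cubic-in-$H$ terms cancel one is left with $I_1=\oint\bigl(-\frac HE\Delta_\Sigma\frac1E+\frac{2HK}{E^2}\bigr)\D S$. For $I_2=\frac{\D}{\D\varphi}\oint\frac KE\D S$ I feed in the divergence form $\partial_\varphi\frac KE=-\frac{1}{E\sqrt g}\partial_i(\beta^{ij}\sqrt g\partial_j\frac1E)$ from \eqref{eq:Kphi1} and integrate by parts on the closed surface $\Sigma_\varphi$ (which has no boundary), converting the divergence into $\oint\beta^{ij}\partial_i\frac1E\partial_j\frac1E\D S$, so that $I_2=\oint\beta^{ij}\partial_i\frac1E\partial_j\frac1E\D S+\oint\frac{2HK}{E^2}\D S$. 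Subtracting cancels the $\frac{2HK}{E^2}$ terms; one further integration by parts of $-\frac HE\Delta_\Sigma\frac1E$ on the closed surface, combined with $\beta^{ij}=2Hg^{ij}-g^{ik}b_k^j$, then gives $I_1-I_2=\oint\bigl(\frac1E DH\cdot D\frac1E-H|D\frac1E|^2+g^{ik}b_k^j\partial_i\frac1E\partial_j\frac1E\bigr)\D S$.

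The decisive term is $I_3$, where I expect the real work to lie. I first rewrite $\frac{|D\log E|^2}{E}=E|D\frac1E|^2$, using $\partial_i\log E=-E\partial_i\frac1E$, so that $I_3=\frac14\frac{\D}{\D\varphi}\oint E|D\frac1E|^2\D S$; the prefactor $E$ is chosen precisely so that, upon applying the differentiation formula, the contribution of $\partial_\varphi E=-2H$ exactly cancels the $\frac{2H}{E}$ area-weight term, leaving the clean expression $I_3=\frac14\oint E\,\partial_\varphi|D\frac1E|^2\D S$. I then expand $\partial_\varphi|D\frac1E|^2=(\partial_\varphi g^{ij})\partial_i\frac1E\partial_j\frac1E+2g^{ij}\partial_i\frac1E\,\partial_j(\partial_\varphi\frac1E)$, insert $\partial_\varphi g^{ij}=-2g^{ik}b_k^j/E$ and $\partial_\varphi\frac1E=\frac{2H}{E^2}$, and simplify $\partial_j(H/E^2)=E^{-2}\partial_j H+2HE^{-1}\partial_j\frac1E$. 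This yields $I_3=\oint\bigl(-\frac12 g^{ik}b_k^j\partial_i\frac1E\partial_j\frac1E+\frac1E DH\cdot D\frac1E+2H|D\frac1E|^2\bigr)\D S$.

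Finally I would form $I_1-I_2-I_3$. The two cross terms $\frac1E DH\cdot D\frac1E$ cancel, the mean-curvature weights combine as $-H|D\frac1E|^2-2H|D\frac1E|^2=-3H|D\frac1E|^2$, and the second-fundamental-form terms combine as $\bigl(1+\frac12\bigr)g^{ik}b_k^j\partial_i\frac1E\partial_j\frac1E=\frac32 g^{ik}b_k^j\partial_i\frac1E\partial_j\frac1E$. Using $\beta^{ij}=2Hg^{ij}-g^{ik}b_k^j$ one last time recognizes $-2H|D\frac1E|^2+g^{ik}b_k^j\partial_i\frac1E\partial_j\frac1E=-\beta^{ij}\partial_i\frac1E\partial_j\frac1E$, whence $I_1-I_2-I_3=-\frac32\oint\beta^{ij}\partial_i\frac1E\partial_j\frac1E\D S$, which is exactly \eqref{eq:W1'-F1'}. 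The main obstacle is the bookkeeping in $I_3$: one must carry the cross term $DH\cdot D\frac1E$ and the weight $2H|D\frac1E|^2$ with the correct signs and coefficients so that they match their counterparts from $I_1-I_2$, and it is precisely the alignment of the coefficients $1$ and $\frac12$ on the $g^{ik}b_k^j$ terms that produces the factor $-\frac32$.
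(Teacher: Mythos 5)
Your computation is correct, and each intermediate step checks out: the area-element formula $\frac{\D}{\D\varphi}\oint f\D S=\oint\bigl(\partial_\varphi f+\frac{2H}{E}f\bigr)\D S$, the cancellation of the cubic-in-$H$ terms in $I_1$, the integration by parts giving $I_2=\oint\beta^{ij}\partial_i\frac1E\partial_j\frac1E\D S+\oint\frac{2HK}{E^2}\D S$, and the coefficient bookkeeping in $I_3$ do combine to produce the factor $-\frac32$. However, this is not the paper's route: it is exactly the ``brute-force computation, using the derivatives of $H$, $K$, $E$ and $g^{ij}$'' that the paper explicitly mentions and then deliberately declines to carry out. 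The paper instead applies the vector Green identity \eqref{eq:vec_Green_id_3D} to $\bm Q=\nabla\log E$ and $\bm F=\bm n/E$ over the shell bounded by two equipotential surfaces $\Sigma_{\varphi_1}\cup\Sigma_{\varphi_2}$, feeds in the Laplacian identities $\Delta\log E+2K=0$ (Proposition \ref{prop:lnE_K}) and $\Delta\frac{\bm n}{E}=4\bigl(\beta^{ij}\partial_i\frac1E\partial_j\bm r+\frac{K\bm n}{E}\bigr)$ (Corollary \ref{cor:Lap_n_E}), computes the curl and divergence of $\bm n/E$ for the boundary terms, and finally differentiates the resulting two-surface identity with respect to $\varphi_1$. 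What your approach buys is economy and self-containedness: you need only the evolution formulae of \S\ref{sec:geo_prep} --- namely \eqref{eq:2H}, \eqref{eq:Hphi}, \eqref{eq:Kphi1} and $\partial_\varphi g^{ij}=-2g^{ik}b_k^j/E$ --- together with integration by parts on a closed surface, and you never invoke Proposition \ref{prop:lnE_K} or Corollary \ref{cor:Lap_n_E} at all. What the paper's route buys is the structural insight promised in its introduction: in two dimensions $\Delta\log E=0$ and $\Delta\frac{\bm n}{E}=\mathbf 0$, so the volume term in the Green identity vanishes and the very same argument yields the conservation laws \eqref{eq:2D_csv}, whereas in three dimensions the right-hand side of \eqref{eq:W1'-F1'} is exhibited as precisely the failure of those two Laplacians to vanish. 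Your derivation is shorter but conceals this unified mechanism linking the 3D monotonicity formula to the 2D conservation law.
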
\begin{proof}One can verify \eqref{eq:W1'-F1'} by a brute-force computation, using the derivatives of $ H$, $K$, $E$ and $g^{ij}$ studied in \S\ref{sec:geo_prep}.
Here, we will build our proof on  Proposition \ref{prop:lnE_K} and Corollary  \ref{cor:Lap_n_E}, to highlight the mechanism shared by the  derivative formula   \eqref{eq:W1'-F1'}  for three-dimensional electrostatics and its two-dimensional counterpart \cite[(2.22)]{Geom2D}.

Specializing the vector Green identity \cite[p.~156]{ColtonKress}\begin{align}\label{eq:vec_Green_id_3D}\int_{\mathfrak D}[\bm Q\cdot\Delta\bm F-\bm F\cdot\Delta\bm Q]\D ^3\bm r=\oint_{\partial \mathfrak D}[(\bm \nu\times\bm Q)\cdot(\nabla\times\bm F)+(\bm \nu\cdot\bm Q)(\nabla\cdot\bm F)-(\bm \nu\times\bm  F)\cdot(\nabla\times\bm Q)-(\bm \nu\cdot\bm F)(\nabla\cdot\bm Q)]\D S\end{align}to $ \bm Q(\bm r)=\nabla\log E(\bm r)$ and $\bm F(\bm r)=\bm n(\bm r)/E(\bm r) $, we may  put down\begin{align}{}&\int_\mathfrak D\left[\nabla\log E(\bm r)\cdot\Delta\frac{\bm n(\bm r)}{E(\bm r)}-\frac{\bm n(\bm r)}{E(\bm r)}\cdot\Delta\nabla\log E(\bm r)\right]\D ^3\bm r\notag\\={}&\oint_{\partial\mathfrak D}\left\{[\bm \nu\times\nabla\log E(\bm r)]\cdot\left[\nabla\times\frac{\bm n(\bm r)}{E(\bm r)}\right]+\left[\bm \nu\cdot\nabla\log E(\bm r)\right]\left[\nabla\cdot\frac{\bm n(\bm r)}{E(\bm r)}\right]-\left[ \bm \nu\cdot\frac{\bm n(\bm r)}{E(\bm r)} \right]\Delta\log E(\bm r)\right\}\D S,\end{align}where $\bm \nu $ is the outward normal vector with respect to the domain  boundary $\partial \mathfrak D$.

We first look at the integral over $ \mathfrak D$ (which vanishes  in the two-dimensional electrostatics where $\Delta\bm Q=\Delta\bm F=\mathbf0 $).  We can rewrite the integrand as\begin{align}\nabla\log E\cdot\Delta\frac{\bm n}{E}-\frac{\bm n}{E}\cdot\Delta\nabla\log E={}&4\beta^{ij}\partial_i\frac1E\partial_{j}\log E+\frac{8HK}{E}+\frac{2}{E}\bm n\cdot\nabla K\notag\\={}&4\beta^{ij}\partial_i\frac1E\partial_{j}\log E+\frac{8HK}{E}+\frac{2}{E}\bm n\cdot\nabla K-8\bm n\cdot\nabla \frac{K}{E}+\frac{8}{\sqrt{g}}\partial_i\left( \beta^{ij}\sqrt{g}\partial_j\frac1E \right),\end{align}
after employing the relations in  \eqref{eq:Kphi1}, \eqref{eq:dlnE2K_id} and \eqref{eq:Lap_nE}.

We then turn our attention to the boundary contributions. If we pick  the  boundary $ \partial\mathfrak D=\Sigma_{\varphi_1}\cup\Sigma_{\varphi_2}$ as the union of two equipotential surfaces $ \Sigma_{\varphi_1}$ and $ \Sigma_{\varphi_2}$, with the latter surface  enclosing the former, then $\bm \nu $ corresponds to  $ \bm n$ on $ \Sigma_{\varphi_2}$ and  $ -\bm n$ on $ \Sigma_{\varphi_1}$.  Meanwhile, it is  straightforward to compute that \begin{align}
\nabla\times\frac{\bm n}{E}={}&-\nabla\times\frac{\nabla\varphi}{E^{2}}=-\nabla\frac{1}{E^{2}}\times\nabla\varphi=\frac{2\bm n\times\nabla\log E}{E},\\\nabla\cdot\frac{\bm n}{E}={}&-\nabla\cdot\frac{\nabla\varphi}{E^{2}}=-\nabla\frac{1}{E^{2}}\cdot\nabla\varphi=-\frac{2\bm n\cdot\nabla\log E}{E}=-\frac{4H}{E}.
\end{align}

Plugging the results from the last two paragraphs into the vector Green identity, we obtain\begin{align}&2\oint_{\Sigma_{\varphi_2}}\frac{\left\vert\bm n\times\nabla\log E\right\vert^{2}-\left\vert\bm n\cdot\nabla\log E\right\vert^{2}+K}{E}\D S-2\oint_{\Sigma_{\varphi_1}}\frac{\left\vert\bm n\times\nabla\log E\right\vert^{2}-\left\vert\bm n\cdot\nabla\log E\right\vert^{2}+K}{E}\D S\notag\\={}&\int_{\varphi_2}^{\varphi_1}\D\varphi\left\{ \oint_{\Sigma_{\varphi}}\left[4\beta^{ij}\partial_i\frac1E\partial_{j}\log E+\frac{12HK}{E}-6\bm n\cdot\nabla \frac{K}{E}+\frac{8}{\sqrt{g}}\partial_i\left( \beta^{ij}\sqrt{g}\partial_j\frac1E \right)\right]\frac{\D S}{E} \right\}\notag\\={}&\int_{\varphi_2}^{\varphi_1}\D\varphi\left\{ -12\oint_{\Sigma_{\varphi}}\beta^{ij}\partial_i\frac1E\partial_{j}\frac{1}{E}\D S +6\frac{\D}{\D\varphi}\oint_{\Sigma_{\varphi}}\frac{K}{E}\D S \right\}.\label{eq:vecGreen_app}\end{align}Here, in the last step, we have integrated by parts, and used the fact that $ \partial_0\sqrt{g}=2H\sqrt{g}/E$ [see the  second half of \eqref{eq:4HE}]. Now, differentiating both sides of \eqref{eq:vecGreen_app} with respect to $ \varphi_1$, we arrive at \begin{align}8\frac{\D}{\D\varphi}\oint_{\Sigma_{\varphi}}\left( H^2 -K-\frac{|D\log E|^2}{4}\right)\frac{\D S}{E}={}&-2\frac{\D}{\D\varphi}\oint_{\Sigma_{\varphi}}\frac{\left\vert\bm n\times\nabla\log E\right\vert^{2}-\left\vert\bm n\cdot\nabla\log E\right\vert^{2}+4K}{E}\D S\notag\\={}&-12\oint_{\Sigma_{\varphi}}\beta^{ij}\partial_i\frac1E\partial_{j}\frac{1}{E}\D S,\end{align}as claimed in \eqref{eq:W1'-F1'}.
\end{proof}
When we are dealing with the convex boundary surfaces $\partial \Omega $ in Theorem~\ref{thm:quasi-csv}, all the equipotential surfaces  $ \Sigma_\varphi\neq \partial \Omega$ in question are  strictly convex \cite[Theorem 1.1]{JostMaOu2012}, on which  $ (\beta^{ij})=(\crb^{ij}/g)$ is negative definite. Therefore, we have a monotonicity statement\begin{align}
\frac{\D}{\D\varphi}\oint_{\Sigma_{\varphi}}\left( H^2 -K-\frac{|D\log E|^2}{4}\right)\frac{\D S}{E}\geq0,\label{eq:deriv_nonneg}
\end{align}where the inequality is strict unless  $ E(\bm r),\bm r\in\Sigma_\varphi\neq \partial \Omega$ is a constant.

It is worth noting that the last inequality is the first instance where the strict convexity of equipotential surfaces  has played an indispensable r\^ole in our derivations. All our previous theoretical developments are applicable  to both convex and non-convex equipotential surfaces alike. Since  Theorem \ref{thm:quasi-csv} and Corollary \ref{cor:free_bd} both require convex equipotential surfaces,  a diligent reader may rework all our main results in this article using the support function of convex  equipotential surfaces, as in Ma--Zhang \cite{MaZhang2013}.   \subsection{Geometric inequalities and their applications\label{subsec:geo_ineq}}  Our next task is to show that\begin{align}\lim_{\varphi\to0}
\oint_{\Sigma_{\varphi}}\left( H^2 -K-\frac{|D\log E|^2}{4}\right)\frac{\D S}{E}=0\label{eq:3-exD_lim}
\end{align}in 3-exD and \begin{align}
\lim_{\varphi\to+\infty}\label{eq:3-inD_lim}
\oint_{\Sigma_{\varphi}}\left( H^2 -K-\frac{|D\log E|^2}{4}\right)\frac{\D S}{E}=0
\end{align} in 3-inD. Once this is done, we can deduce the two inequalities in Theorem \ref{thm:quasi-csv} from  \eqref{eq:deriv_nonneg}.

As we go to sufficiently large distances $ |\bm r|$ in $ \Omega$, say, away from the circumsphere of $ \mathbb R^3\smallsetminus\Omega$, the spherical harmonic expansion of $U(\bm r)$ converges uniformly and absolutely \cite[\S4.1]{Jackson:EM}:\begin{align}U(\bm r)=\sum_{\ell =0}^\infty\sum_{m=-\ell}^\ell\sqrt{\frac{4\pi}{2\ell+1}}c_{\ell m}\frac{Y_{\ell m}(\theta,\phi)}{|\bm r|^{\ell+1}},\end{align}where the spherical coordinates $  \bm r=|\bm r|(\sin\theta\cos\phi,\sin\theta\sin\phi,\cos\theta)$ are employed, along with  the spherical harmonic function $ Y_{\ell m}(\theta,\phi)$ and  the constants $ c_{\ell m}$ [the multi-pole coefficients associated with the $ (\ell, m)$-modes]. The only significant contributors to our surface integral are the two leading $\ell$-modes: $ \ell=0,1$, as all the higher-order terms  amount to infinitesimal corrections to our surface integral for equipotential surfaces at infinite distances. Without loss of generality, we may evaluate the left-hand side of \eqref{eq:3-exD_lim} by investigating  the dipole field \begin{align}U(\bm r)=\frac{c_{00}}{|\bm r|}+\frac{c_{10}\cos\theta}{|\bm r|^2}, \quad c_{00}>0,c_{10}\neq0,\end{align}which is rotationally symmetric about the $z$-axis. We parametrize the equipotential surface $\Sigma_U$ with\begin{align}\begin{cases}x=\dfrac{c_{00}+\sqrt{(c_{00})^2+4c_{10}U\cos\theta}}{2U}\sin\theta\cos\phi  \\[8pt]
y=\dfrac{c_{00}+\sqrt{(c_{00})^2+4c_{10}U\cos\theta}}{2U}\sin\theta\sin\phi\\[8pt] z=\dfrac{c_{00}+\sqrt{(c_{00})^2+4c_{10}U\cos\theta}}{2U}\cos\theta
\end{cases}(0\leq\theta\leq\pi,0\leq\phi\leq2\pi)\end{align} so that the surface element is given by \begin{align}\D S=\left[\frac{(c_{00})^2 \sin\theta }{U^2}+\frac{c_{10} \sin2 \theta }{U}+O(U^0)\right]\D\theta\D\phi;\end{align}the two principal curvatures (eigenvalues of the Weingarten transform $ \hat W$) read\begin{align}k_\theta=\frac{U}{c_{00}}+\frac{(c_{10})^2 (1-9 \cos 2 \theta) U^3}{4 (c_{00})^5}+O(U^{4}),\quad k_\phi=\frac{U}{c_{00}}-\frac{(c_{10})^2 (5+3 \cos 2 \theta) U^3}{4 (c_{00})^5}+O(U^{4}),\end{align}leading us to \begin{align} H^2-K=\frac{(k_\theta-k_\phi)^{2}}{4}=O(U^{6});\end{align}  the surface distribution of  $ E=E(\theta,\phi)$ satisfies\begin{align}E={}&\frac{U^2}{c_{00}}-\frac{(c_{10})^2  (3 \cos2 \theta+1)U^4 }{4  (c_{00})^5}+O(U^5);\\|\bm n\times\nabla\log E|^{2}={}&\frac{9(c_{10})^4  \sin ^2\theta \cos ^2\theta }{ (c_{00})^{10}}U^6+O(U^7).\end{align}Now it becomes clear that  our integral on $ \Sigma_U$ has order $ O(U^{2})$ for the dipole field.
Hence, the limit formula \eqref{eq:3-exD_lim} is true.

So far, we have established\begin{align}
\oint_{\Sigma_{\varphi}}\left( H^2 -K-\frac{|D\log E|^2}{4}\right)\frac{\D S}{E}=-\frac{3}{2}\int_0^\varphi\left( \oint_{\Sigma_U}\beta^{ij}\partial_{i}\frac{1}{E}\partial_{j}\frac{1}{E}\D S \right)\D U\geq0\label{eq:3-exDc_vol_int}
\end{align} for 3-exDc, where $ (\beta^{ij})$ is negative definite on $\Sigma_{U}$ for all $ U\in(0,\varphi)$.

If the equality holds for a certain given $ \Sigma_{\varphi}$, then we will have $ D\log E(\bm r)=0,\bm r\in\Sigma_{U}$ for all $ U\in(0,\varphi)$, and also\begin{align}
\oint_{\Sigma_{U}}\frac{H^2 -K}{E}\D S=\oint_{\Sigma_{U}}\left( H^2 -K-\frac{|D\log E|^2}{4}\right)\frac{\D S}{E}=0
\end{align} for all $ U\in(0,\varphi)$. This implies that at every point on the strictly convex surface $ \Sigma_{U}$, the two eigenvalues of the Weingarten transform $ \hat W$ are equal, so  $ \Sigma_{U}$ must be a sphere \cite[\S5.2, Theorem 1b]{doCarmo2016}. The condition  $ D\log E(\bm r)=0,\bm r\in\Sigma_{U}$ also means that the spheres $\Sigma_U,U\in(0,\varphi) $ are all concentric. If the center of these spheres is $ \bm r_0\in \mathbb R^3$,  then we will have $ U(\bm r)=\frac{\Phi}{4\pi|\bm r-\bm r_0|}$ whenever $ |\bm r-\bm r_0|>\frac{\Phi}{4\pi \varphi}$. By the unique continuation principle \cite{Aronszajn1957,Protter1960}, we know that  $ U(\bm r)=\frac{\Phi}{4\pi|\bm r-\bm r_0|}$ holds for all  $ \bm r\in\Omega\cup\partial\Omega$. This proves that $\partial\Omega $ is spherical.

After establishing the first half of Theorem \ref{thm:quasi-csv}, we can move on to the 3-exDc case of Corollary \ref{cor:free_bd}.
Suppose that we have a spherical equipotential surface $\Sigma_\varphi $ on which\begin{align}
-\oint_{\Sigma_{\varphi}}\frac{|D\log E|^2}{4E}{\D S}=\oint_{\Sigma_{\varphi}}\left( H^2 -K-\frac{|D\log E|^2}{4}\right)\frac{\D S}{E}\geq0
\end{align}entails $ D\log E(\bm r)=0,\bm r\in\Sigma_{\varphi}$, so equality holds in \eqref{eq:3-exDc_vol_int}. We are then reduced to the situations in the last paragraph, whereupon a spherical   $ \partial \Omega$ becomes inevitable.

 It is much easier to prove the limit formula \eqref{eq:3-inD_lim} for 3-inDc, because \begin{align}
\frac{\D S}{E}=O(|\bm r|^4\sin \theta\D\theta\D\phi),\quad H^2-K=O\left( \frac{1}{|\bm r|^2} \right),\quad |D\log E|=O\left( \frac{1}{|\bm r|} \right),
\end{align}as $ |\bm r|\to0$. This quickly leads us to \begin{align}
\oint_{\Sigma_{\varphi}}\left( H^2 -K-\frac{|D\log E|^2}{4}\right)\frac{\D S}{E}=\frac{3}{2}\int_\varphi^{+\infty}\left( \oint_{\Sigma_G}\beta^{ij}\partial_{i}\frac{1}{E}\partial_{j}\frac{1}{E}\D S \right)\D G\leq0\label{eq:3-inDc_ineq'}
\end{align} for 3-inDc, where $ (\beta^{ij})$ is negative definite on $\Sigma_{G}$ for all $ G\in(\varphi,+\infty)$.

If the equality holds for a certain given $ \Sigma_{\varphi}$, then we will have
 $ D\log E(\bm r)=0,\bm r\in\Sigma_{G}$ for all $ G\in(\varphi,+\infty)$, and also\begin{align}
\oint_{\Sigma_{G}}\frac{H^2 -K}{E}\D S=\oint_{\Sigma_{G}}\left( H^2 -K-\frac{|D\log E|^2}{4}\right)\frac{\D S}{E}=0
\end{align} for all $ G\in(\varphi,+\infty)$. This implies that the strictly convex equipotential surfaces $\Sigma_G,G\in(\varphi,+\infty) $ are  concentric spheres, and that $ G(\bm r)=\frac{1}{4\pi|\bm r|}$ for $ 0<|\bm r|<\frac{1}{4\pi \varphi}$. Again, by unique continuation, we conclude that  $\partial\Omega $ must be a  sphere centered at the origin.

After completing the verification of Theorem \ref{thm:quasi-csv}, we can wrap up our main course with the 3-inDc case of Corollary \ref{cor:free_bd}.
Suppose that we have  $|D\log E(\bm r)|=|\bm n\times\nabla G(\bm r)|=0,\bm r\in\Sigma_\varphi $ so that \begin{align}
\oint_{\Sigma_{\varphi}}\frac{(H^2 -K)\D S}{E}=\oint_{\Sigma_{\varphi}}\left( H^2 -K-\frac{|D\log E|^2}{4}\right)\frac{\D S}{E}\leq0.
\end{align}Since $ H^2-K\geq0$, we are led to $ H^2-K\equiv0$ on $ \Sigma_\varphi$.  Therefore, equality holds in \eqref{eq:3-inDc_ineq'}, and we are reduced to the scenario in the last paragraph, with the same conclusion about the configuration of  $\partial\Omega$.

It is worth noting that Agostiniani and Mazzieri \cite[Appendix A]{AgostinianiMazzieri2015} have furnished us a general framework for asymptotic analysis of exterior and interior Dirichlet problems involving harmonic potentials, applicable to Euclidean spaces of arbitrary dimensions. I thank an anonymous referee for bringing my attention to their work.

\subsection{Related problems in $ \mathbb R^d$ ($d\geq2$)\label{subsec:rel_prob}}In the next theorem, we strengthen the  first statement   of Corollary \ref{cor:free_bd} in $ \mathbb R^d$ ($d>2$).\begin{theorem}[A free boundary problem in $ d$-exD]Let $ d\in\mathbb Z_{>2}$. Suppose that $ U(\bm r),\bm r\in\Omega\subset\mathbb R^d$ solves the Laplace equation in an unbounded domain $ \Omega$, whose boundary $ \partial \Omega$ is a compact (hyper)surface. This function has  asymptotic behavior  $ U(\bm r)\sim \frac{\Phi }{d(d-2)\pi^{d/2}|\bm r|^{d-2}}\int_0^\infty t^{d/2}e^{-t}\D t $ as $ |\bm r |\to+\infty$. If one  equipotential surface in $ \Omega$ is a (hyper)sphere centered at $ \bm r_0\in\mathbb R^d$, then  $ U(\bm r)= \frac{\Phi }{d(d-2)\pi^{d/2}|\bm r-\bm r_0|^{d-2}}\int_0^\infty t^{d/2}e^{-t}\D t$ holds for all $ \bm r\in\Omega$.\end{theorem}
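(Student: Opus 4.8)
The plan is to prove this rigidity statement without any of the curvature machinery behind Theorem~\ref{thm:quasi-csv}: in dimension $d\geq3$ the conclusion follows from the \emph{uniqueness} of the exterior Dirichlet problem together with the \emph{unique continuation principle}, and no convexity is needed. Write $K:=\mathbb R^d\smallsetminus\Omega$ for the compact conductor, let $S:=\{\bm r:|\bm r-\bm r_0|=R\}$ be the spherical equipotential surface on which $U\equiv\varphi$, and let $\Omega_R:=\{\bm r:|\bm r-\bm r_0|>R\}$ be the unbounded region it bounds. Abbreviate the target coefficient as $C:=\frac{\Phi}{d(d-2)\pi^{d/2}}\int_0^\infty t^{d/2}e^{-t}\D t$, so that the claim is $U(\bm r)=C|\bm r-\bm r_0|^{-(d-2)}$ on all of $\Omega$.

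First I would settle the topological picture, namely that $S$ separates $K$ from infinity, so that $\Omega_R\subset\Omega$ and $U$ is harmonic throughout $\Omega_R$. Since $U>0$ is harmonic on $\Omega$, constant on $\partial\Omega$, and $U(\bm r)\to0$ as $|\bm r|\to\infty$, the maximum principle forces the super-level set $\{U\geq\varphi\}\cup K$ to be bounded, with topological boundary contained in the level set $\{U=\varphi\}$. Because the relevant level set is the round sphere $S$, this bounded region must coincide with the closed ball $\overline{B(\bm r_0,R)}$; hence $K\subset\overline{B(\bm r_0,R)}$, the exterior $\Omega_R$ lies in $\Omega$, and $U<\varphi$ there while $U=\varphi$ on $S$.

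Next I would introduce the explicit radial comparison potential $V(\bm r):=\varphi R^{d-2}|\bm r-\bm r_0|^{-(d-2)}$, which is harmonic on $\Omega_R$ (its only singularity, at $\bm r_0$, is interior to the ball), equals $\varphi$ on $S$, and decays at infinity. Then $W:=U-V$ is harmonic on $\Omega_R$, vanishes on $S$, and tends to $0$ at infinity, so the maximum principle for the exterior domain — valid precisely because $d\geq3$ turns the decay at infinity into a genuine boundary condition at the one-point compactification — yields $W\equiv0$, i.e.\ $U=V$ on $\Omega_R$. Matching the leading asymptotics $U\sim C|\bm r|^{-(d-2)}$ against $V\sim\varphi R^{d-2}|\bm r|^{-(d-2)}$ (equivalently, comparing the conserved flux $\Phi$ through $S$) forces $\varphi R^{d-2}=C$, so that $U=V=C|\bm r-\bm r_0|^{-(d-2)}$ on $\Omega_R$.

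Finally I would globalize by unique continuation. If $\bm r_0\notin\Omega$, then $V$ is harmonic on the whole connected domain $\Omega$; since $U=V$ on the open set $\Omega_R$, real-analyticity of harmonic functions gives $U=V$ on all of $\Omega$, which is the assertion. The case $\bm r_0\in\Omega$ is then ruled out by the same tool: there $U$ and $V$ are both harmonic on the connected set $\Omega\smallsetminus\{\bm r_0\}$ and agree on $\Omega_R$, so unique continuation would force $U=V$ throughout $\Omega\smallsetminus\{\bm r_0\}$, contradicting the finiteness of $U$ at the interior point $\bm r_0$ where $V$ blows up. I expect the genuine obstacle to be the first step — rigorously certifying that the spherical level set encloses $K$ and that $\Omega_R\subset\Omega$ — because it is the one place where the \emph{global} structure of the level sets of $U$ enters, rather than a local elliptic estimate; everything downstream is standard. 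It also deserves emphasis that the restriction $d>2$ is essential here, since it is what simultaneously secures the decay of the Newtonian kernel and the exterior maximum principle, and this is exactly why the two-dimensional case must be treated by the separate, sharper argument.
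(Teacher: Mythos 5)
Your argument is correct, and it reaches the conclusion by a genuinely different mechanism than the paper. The paper's own proof is an inversion argument: it sets $V(\bm r'):=|\bm r-\bm r_0|^{d-2}U(\bm r)$ with $\bm r'=R^{2}(\bm r-\bm r_0)/|\bm r-\bm r_0|^{2}$, which is (up to the constant factor $R^{d-2}$) the Kelvin transform of $U$ in the equipotential sphere; the decay hypothesis makes the point at infinity a removable singularity, so $V$ extends to a bounded harmonic function on the ball $|\bm r'|\leq R$ with constant boundary data, hence is constant, giving $U=\varphi R^{d-2}|\bm r-\bm r_0|^{2-d}$ outside the sphere. You instead compare $U$ directly with that radial potential via the maximum principle on the exterior domain, using the assumed decay as the boundary condition at infinity. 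Both routes rest on the same classical uniqueness principle, but yours proves exterior uniqueness directly, while the paper converts it into interior uniqueness by inversion; the paper's version is shorter because the removable-singularity theorem absorbs the decay hypothesis, whereas yours is more self-contained in that it never needs the fact that the Kelvin transform preserves harmonicity. A genuine merit of your write-up is that it makes explicit two steps the paper leaves tacit: (i) the topological fact that the spherical level set encloses the compact complement $\mathbb R^{d}\smallsetminus\Omega$, which either proof needs (the paper's definition of $V$ silently presupposes that $U$ is defined on all of $|\bm r-\bm r_0|\geq R$), and (ii) the globalization from the exterior region $\Omega_R$ to all of $\Omega$ by real analyticity/unique continuation, including the elimination of the case $\bm r_0\in\Omega$ — the paper stops at the formula outside the sphere. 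One caveat: your enclosure argument uses that $U$ is constant on $\partial\Omega$ (so that the boundary of the super-level set $\{U\geq\varphi\}$ cannot meet $\partial\Omega$) together with the convention that the "equipotential surface'' $\Sigma_\varphi$ is the full level set; both are part of the paper's standing exterior-Dirichlet setting even though the theorem's statement does not restate them, so you should flag that dependence.
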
\begin{proof}Suppose that we have an equipotential surface $ |\bm r-\bm r_0|=R$. Define
\begin{align}
V(\bm r')\equiv V\left( \frac{R^{2} (\bm r-\bm r_0)}{|\bm r-\bm r_0|^{2}}\right):=|\bm r-\bm r_0|^{d-2}U(\bm r),\quad |\bm r-\bm r_0|\geq R.
\end{align}One can check that this expression extends to a bounded harmonic function $ V(\bm r'),|\bm r'|\leq R$, whose boundary value is a constant. Such a harmonic function must  be a constant function. This proves our claim that all the equipotential surfaces of  $ U(\bm r),\bm r\in\Omega\subset\mathbb R^d$ are (hyper)spherical. \end{proof}

We note that  the second half of Corollary \ref{cor:free_bd}  extends to
  $ d$-inD  (without any convexity requirements) of arbitrary dimensions $d$, as shown by Payne--Schaeffer \cite[Theorem III.2]{PayneSchaefer1989}. Similar results for $ p$-harmonic functions have also been obtained by Alessandrini--Rosset \cite[Theorem 1.1]{AlessandriniRosset2007}, Enciso--Peralta-Salas \cite[Theorem 1]{EncisoPeralta-Salas2009}
and Poggesi \cite[Theorem 1.3]{Poggesi2019}.

Before closing this article, we state and prove the planar analog of Corollary \ref{cor:free_bd} (assuming that  the boundary curves  $\partial\Omega$ are always smooth Jordan curves), using results from \cite{Geom2D}.

\begin{theorem}[Circular solutions to three free boundary value problems]If there is a circular equipotential curve in 2-exD, then the boundary  $ \partial\Omega $ must be a circle. If there is an equipotential curve in 2-exD on which $ E(\bm r)=|\nabla U(\bm r)|$ remains constant, then  $ \partial\Omega $ must be a circle. If there is an equipotential curve in 2-inD on which $ E(\bm r)=|\nabla G(\bm r)|$ remains constant, then  $ \partial\Omega $ must be a circle centered at the origin.\end{theorem}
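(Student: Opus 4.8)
The plan is to build everything on the conservation laws imported from \cite[\S2.2 and \S3]{Geom2D}, namely that the integrals in \eqref{eq:2D_csv} are independent of $\varphi$, together with the flux invariant $\oint_{\Sigma_\varphi}E\,\D s$ (which equals $\Phi$ in 2-exD and $1$ in 2-inD). First I would fix the value of the conserved integral by an asymptotic evaluation, exactly parallel to the limits \eqref{eq:3-exD_lim}--\eqref{eq:3-inD_lim} used in the three-dimensional argument. On a large circular equipotential of radius $R$ in 2-exD one has $\kappa\sim 1/R$, $E\sim \Phi/(2\pi R)$, length $\sim 2\pi R$, and a radial (monopole) leading behaviour, so $|\bm n\times\nabla\log E|\to 0$ and $\oint_{\Sigma}\frac{\kappa^2-|\bm n\times\nabla\log E|^2}{E}\D s\to 4\pi^2/\Phi$; the analogous computation on a small circle about the source in 2-inD gives the constant value $4\pi^2$. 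I record these two constants at the outset.

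For the first statement (circular equipotential in 2-exD) I would \emph{not} use the conservation law at all, since a circular level set alone does not force $E$ to be constant on it. Instead, placing the given circle $\Sigma_{\varphi_0}=\{U=\varphi_0\}$ at the origin with radius $R_0$, I would use the maximum principle to show that the unbounded component $\{U<\varphi_0\}$ is precisely the exterior $\{|\bm r|>R_0\}$ and lies inside $\Omega$ (because $\partial\Omega=\{U=U_{\max}\}$ with $U_{\max}>\varphi_0$ must be enclosed by the circle). Thus $U$ is harmonic on $\{|\bm r|>R_0\}$, equal to the constant $\varphi_0$ on $\{|\bm r|=R_0\}$, with prescribed logarithmic growth at infinity; exterior Dirichlet uniqueness then identifies $U$ with the monopole $\varphi_0-\frac{\Phi}{2\pi}\log(|\bm r|/R_0)$ there, and unique continuation \cite{Aronszajn1957,Protter1960} propagates this identity throughout the connected domain $\Omega$. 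Hence every level set, $\partial\Omega$ included, is a circle concentric with $\Sigma_{\varphi_0}$.

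The second statement (constant $E$ on a level set in 2-exD) is where the conservation law does the essential work. If $E\equiv E_0$ on $\Sigma_{\varphi_0}$ then the tangential term $|\bm n\times\nabla\log E|$ vanishes there, so the conserved integral collapses to $E_0^{-1}\oint_{\Sigma_{\varphi_0}}\kappa^2\,\D s=4\pi^2/\Phi$; combining this with the flux relation $E_0 L=\Phi$ ($L$ the length) yields $\oint_{\Sigma_{\varphi_0}}\kappa^2\,\D s=4\pi^2/L$. By Cauchy--Schwarz together with the turning-tangent identity $\oint_{\Sigma_{\varphi_0}}\kappa\,\D s=2\pi$, this is exactly the equality case, forcing $\kappa$ to be a positive constant, i.e. $\Sigma_{\varphi_0}$ is a circle; I then quote the first statement to conclude that $\partial\Omega$ is a circle.

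The third statement (constant $E$ on a level set in 2-inD) begins identically: the conservation law with value $4\pi^2$ and flux $1$ gives $\oint_{\Sigma_{\varphi_0}}\kappa^2\,\D s=4\pi^2/L$, so $\Sigma_{\varphi_0}$ is a circle, of some center $c$ and radius $R_0$ with $E_0=1/(2\pi R_0)$. The hard part, and the main obstacle, is to upgrade ``circle'' to ``circle centered at the origin,'' for which the monopole-comparison of the first statement is unavailable (the source sits at $\mathbf 0$, not at $c$). Here I would pass to the holomorphic gradient $f(z)=\partial_x G-\mathrm i\,\partial_y G$, which is holomorphic on the disk $D$ bounded by $\Sigma_{\varphi_0}$ except for the prescribed singularity, so $f(z)=-\frac{1}{2\pi z}+q(z)$ with $q$ holomorphic on $D$ (both $0$ and $c$ lie in $D$). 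The overdetermined Cauchy data --- $G\equiv\varphi_0$ and $|\nabla G|\equiv E_0$, with $\nabla G$ normal --- translate, using $E_0R_0=\frac{1}{2\pi}$ and $\overline{z-c}=R_0^2/(z-c)$ on $\partial D$, into the boundary identity $z(z-c)q(z)=-\frac{c}{2\pi}$. Setting $\psi(z):=z(z-c)q(z)+\frac{c}{2\pi}$, which is holomorphic on $D$ and vanishes on $\partial D$, the maximum modulus principle forces $\psi\equiv 0$; evaluating at $z=0$ gives $\psi(0)=\frac{c}{2\pi}=0$, hence $c=0$, and then $z^2q(z)\equiv 0$ gives $q\equiv 0$. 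Thus $f(z)=-\frac{1}{2\pi z}$, so $G$ is the origin-centered monopole on $D$, and unique continuation extends this to the whole cavity, making $\partial\Omega=\{G=0\}$ a circle centered at the origin. I expect the translation of the overdetermined data into the clean rational identity on $\partial D$, and the verification that $\psi$ is genuinely holomorphic (poles cancelled), to be the delicate points to execute carefully.
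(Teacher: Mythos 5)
Your proposal is correct, and it takes a genuinely different route from the paper's. The paper derives all three statements from the equality cases of machinery imported from \cite{Geom2D}: for the first statement it substitutes constant $\kappa$ into the inequality \eqref{eq:grad_prod}, forcing $\bm n\times\nabla(1/E)=\mathbf 0$ and hence equality there; for the second and third it invokes the variance paraphrase of the conservation law to conclude $D(\kappa/E)=0$ from $D(1/E)=\mathbf 0$, and then cites the equality analysis of \cite[\S 2.3 and \S 3]{Geom2D} --- in particular the ``centered at the origin'' conclusion in 2-inD is quoted, not reproved. You do three things differently: (i) your first statement uses no input from \cite{Geom2D} at all, only exterior Dirichlet uniqueness for the logarithmic monopole plus unique continuation, which parallels the paper's own $d$-exD theorem ($d>2$) rather than its 2D argument; (ii) for constant $E$ you import only the conservation law \eqref{eq:2D_csv} itself, fix its value ($4\pi^2/\Phi$, resp.\ $4\pi^2$) by monopole asymptotics --- the analogue of \eqref{eq:3-exD_lim}--\eqref{eq:3-inD_lim} --- and then obtain circularity of the level curve from the Umlaufsatz $\oint\kappa\,\D s=2\pi$ together with the equality case of Cauchy--Schwarz, which in effect re-derives the paper's variance identity in this special case; (iii) the origin-centering in 2-inD is proved by a self-contained overdetermined-boundary argument: your boundary identity is right, since $f=\partial_xG-\mathrm i\,\partial_yG=-1/(2\pi z)+q(z)$ and the complex form of $\nabla G$ equals $-E_0(z-c)/R_0$ on the circle, so $\psi(z)=z(z-c)q(z)+c/(2\pi)$ vanishes on $\partial D$; moreover $\psi$ is holomorphic across $\partial D$ because $G$ is harmonic in a neighborhood of an interior level curve, so the maximum-modulus step is legitimate and yields $c=0$, $q\equiv 0$. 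What your route buys is independence from the equality-case analysis of \cite{Geom2D} and an explicit, classical-tools proof of the origin-centering; the cost is that you must actually carry out the asymptotic evaluation of the two constants and justify the standard structural facts (each level curve is a single Jordan curve enclosing $\partial\Omega$, resp.\ the origin, and the potential has no critical points), which the paper inherits from its framework. The paper's route is shorter given \cite{Geom2D} and exhibits all three statements as instances of a single quadratic-form mechanism.
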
\begin{proof}In \cite[\S2.3]{Geom2D}, we have demonstrated the following inequality (strict unless $ \partial\Omega$ is circular) \begin{align}
\oint_\Sigma \left[\bm n\times\nabla\frac{\kappa}{E}\right]\cdot\left[\bm n\times\nabla\frac{1}{E} \right]\D s\leq0\label{eq:grad_prod}
\end{align}for all equipotential curves $\Sigma $ in 2-exD. Here, the sign convention for curvature has been chosen so that the unit circle has $\kappa=+1 $. If an equipotential curve  $ \Sigma$ is circular, then we will have\begin{align}
\kappa\oint_\Sigma \left[\bm n\times\nabla\frac{1}{E}\right]\cdot\left[\bm n\times\nabla\frac{1}{E} \right]\D s\leq0
\end{align} for a positive constant $\kappa$. This means that  $ \bm n\times\nabla\frac{1}{E(\bm r)}=\mathbf0,\bm r\in \Sigma$ and equality holds in \eqref{eq:grad_prod}. Therefore, the boundary curve  $ \partial\Omega $ is indeed a circle.

For two-dimensional electrostatic problems, we have  $ \kappa=-2H$, $ K\equiv0$ and $ \beta^{ij}\equiv0$. Thus, Proposition \ref{prop:lnE_K} and Corollary \ref{cor:Lap_n_E} reduce to $ \Delta \log E=0$ and $ \Delta\frac{\bm n}{E}=0$, respectively. The vector Green identity in our proof of Corollary \ref{cor:betaij_int} then brings us an integral \begin{align}
\oint_{\Sigma_\varphi}\frac{\kappa^2-|D \log E|^2}{E}\D s
\end{align}  that is independent of $\varphi$. In \cite[\S2.2 and \S3.2]{Geom2D}, we have shown that such a geometric conservation law can be paraphrased as\begin{align}
\oint_{\Sigma_\varphi}\left(\frac{\kappa}{E}-\oint_{\Sigma_\varphi}\frac{\kappa}{E}\D \mu\right)^{2}\D \mu=\oint_{\Sigma_\varphi}\left\vert D\frac{1}{E} \right\vert^{2}\D \mu
\end{align} for a probability measure $ \D\mu=E\D s/\Phi $.
In both 2-exD and 2-inD, plugging a constant field intensity $ D\frac{1}{E(\bm r)}=\mathbf 0,\bm r\in\Sigma_\varphi$ into the right-hand side of the equation above, we may read off from the left-hand side  that $ D\frac{\kappa}{E}=0$ on the respective equipotential curve. This implies that there is an equality\begin{align}
\oint_{\Sigma_{\varphi}} \left[\bm n\times\nabla\frac{\kappa}{E}\right]\cdot\left[\bm n\times\nabla\frac{1}{E} \right]\D s=0.
\end{align} According to our analysis in \cite[\S2.3 and \S3]{Geom2D}, this can only happen if  $ \partial \Omega$ is a circle in 2-exD, or   $ \partial \Omega$ is a circle centered at the origin  in 2-inD.    \end{proof}

\subsection*{Acknowledgments}Part of this work was assembled from  my research notes in 2006 (on curvature effects in nanophotonics) and 2011 (on entropy in curved spaces). I am grateful to Prof.\ Xiaowei Zhuang (Harvard) and Prof.\ Weinan E (Princeton) for their thought-provoking questions in 2006 and 2011 that inspired these research notes.
Many thanks are also due to two referees whose suggestions helped improving the presentation of the current work.

\end{document}